\author{Marie-Louise Bruner  \\ Vienna University of Technology, Austria \\ \texttt{marie-louise.bruner@tuwien.ac.at}
}
\title{On restricted permutations on regular multisets\thanks{This work was supported by the Austrian Science Foundation FWF, grant P25337-N23.}}
\date{\today}
\theoremstyle{plain}
\newtheorem{Thm} {Theorem} [section]
\newtheorem{Prop} [Thm] {Proposition}
\newtheorem{Def} [Thm]{Definition}
\newtheorem{Cor} [Thm] {Corollary}
\newtheorem{Cla} {Claim}
\theoremstyle{definition}
\newtheorem{Ex} [Thm]{Example}
\theoremstyle{remark}
\newtheorem{Rem} [Thm]{Remark}
\newcommand{\unt}{\underline}
\newcommand{\pe}{permutation}
\newcommand{\mul}{multiset}
\newcommand{\pes}{permutations }
\newcommand{\zeill}{\parbox[0pt][2em][c]{0cm}}
\newcommand{\zeil}{\parbox[0pt][3em][c]{0cm}}
\begin{document}
\thispagestyle{empty}
\maketitle

\begin{abstract}
The extension of pattern avoidance from ordinary permutations to those on multisets gave birth to several interesting enumerative results. 
We study permutations on regular multisets, i.e.,  multisets in which each element occurs the same number of times. 
For this case, we close a gap in the work of Heubach and Mansour \cite{heubach2006avoiding} and complete the study of permutations avoiding a pair of patterns of length three. 
In all studied cases, closed enumeration formul\ae\ are given and well-known sequences appear. 
We conclude this paper by some remarks on a generalization of the Stanley-Wilf conjecture to permutations on multisets and words.

\medskip
\noindent \textbf{Mathematics subject classification:} 05A05, 05A15, 05A16.
\end{abstract}

\pagestyle{fancy}
\section{Introduction}

\subsection{Restricted permutations on multisets}
One possible origin of restricted permutations can be found in an exercise of Knuth in \cite{knuth1968art} in which he proved that permutations sortable using one stack are exactly those that avoid the pattern $231$.
The first systematic study of pattern avoiding permutations was then performed by Simion and Schmidt in \cite{simion1985restricted}. Since then, restricted permutations have become a field of intensive research as witnessed by B\'{o}na's \textit{Combinatorics of Permutations}~\cite{bona_combinatorics_2004} and Kitaev's recent monograph \textit{Patterns in Permutations and Words} \cite{kitaev2011patterns}.

\begin{Def}
A \pe\ $\sigma=\sigma_1 \sigma_2 \ldots \sigma_n$ of length $n \geq k$ is said to \textit{contain} another \pe\ $\pi=\pi_1 \pi_2 \ldots \pi_k$ as a \emph{pattern } if we can find $k$ entries $\sigma_{i_1}, \sigma_{i_2}, ..., \sigma_{i_k}$ with $i_1 < i_2 < ... < i_k$ such that $\sigma_{i_a} < \sigma_{i_b} \Leftrightarrow \pi_a < \pi_b$. In other words, \emph{$\sigma$ contains $\pi$} if we can find a subsequence of $\sigma$ that is order-isomorphic to $\pi$. If there is no such subsequence we say that $\sigma$ \emph{avoids the pattern $\pi$}.
\label{def_avoidance_pes}
\end{Def}

\begin{Ex}
The \pe\ $\sigma=21543$ (written in one-line representation respectively as a sequence of integers) contains the pattern $\pi=132$, since the entries $143$ form a $132$-pattern. 
The pattern $\pi=123$ is however avoided by $\sigma$ since it does not contain any increasing subsequence of length three.
\end{Ex}

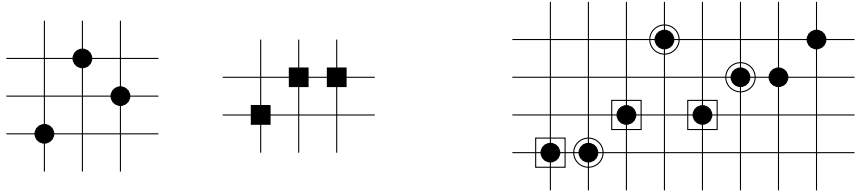
\begin{figure}
\begin{minipage}[hbt]{0.2\textwidth}
	\centering
	\begin{tikzpicture}
[0/.style={rectangle, draw, minimum size=28.5pt}, 
1/.style={circle, draw, fill=black, minimum size=7pt, inner sep=2pt}, 
m/.style={circle, draw, fill=none, minimum size=11pt}, 
2/.style={rectangle, fill=black!20, minimum size=14.25pt}, 
scale=0.5]

   
\foreach \y in {1,2,3}
\draw (0,\y)--(4, \y);
\foreach \x in {1,2,3}
\draw (\x,0)--(\x, 4);

\foreach \x/\y in { 1/1, 2/3, 3/2}
    \node[1] at (\x, \y) {};

\end{tikzpicture}
\end{minipage}
\hfill
\begin{minipage}[hbt]{0.2\textwidth}
	\centering
	\begin{tikzpicture}
[0/.style={rectangle, draw, minimum size=28.5pt}, 
1/.style={rectangle, draw, fill=black, minimum size=7pt, inner sep=1.5pt}, 
m/.style={circle, draw, fill=none, minimum size=11pt}, 
2/.style={rectangle, fill=black!20, minimum size=14.25pt}, 
scale=0.5]

   
\foreach \y in {1,2}
\draw (0,\y)--(4, \y);
\foreach \x in {1,2,3}
\draw (\x,0)--(\x, 3);

\foreach \x/\y in { 1/1, 2/2, 3/2}
    \node[1] at (\x, \y) {};

\end{tikzpicture}
\end{minipage}
\hfill
\begin{minipage}[hbt]{0.55\textwidth}
	\centering
	\begin{tikzpicture}
[0/.style={rectangle, draw, minimum size=28.5pt}, 
1/.style={circle, draw, fill=black, minimum size=7pt, inner sep=2pt}, 
m/.style={circle, draw, fill=none, minimum size=11pt}, 
m2/.style={rectangle, draw, fill=none, minimum size=11pt}, 
2/.style={rectangle, fill=black!20, minimum size=14.25pt}, 
scale=0.5]

   
\foreach \y in {1,2,3,4}
\draw (0,\y)--(9, \y);
\foreach \x in {1,2,3,4,5,6,7,8}
\draw (\x,0)--(\x, 5);

\foreach \x/\y in { 1/1, 2/1, 3/2, 4/4, 5/2, 6/3, 7/3 , 8/4}
    \node[1] at (\x, \y) {};
\foreach \x/\y in { 2/1,4/4, 6/3}
    \node[m] at (\x, \y) {};
\foreach \x/\y in { 1/1,3/2, 5/2}
    \node[m2] at (\x, \y) {};

\end{tikzpicture}
\end{minipage}
\caption{Pattern containment in permutations on multisets: The permutation to the right contains both patterns $132$ and $122$ as can be seen by the subsequences of elements marked by circles respectively squares.}
\label{pattern.avoidance.multi.grid}
\end{figure}

Considering \textit{\pe s on \mul s} (or words on a certain finite alphabet) is a natural generalization of ordinary \pe s. 
Intuitively, a \mul\ is a ``set'' in which elements may occur more than once.

\begin{Def}
A \emph{multiset} is a pair $(\mathcal{S},\mu)$, where $\mathcal{S}$ is a set and $\mu: \mathcal{S} \rightarrow \mathbb{N}\setminus \left\lbrace 0\right\rbrace$ is a function.
Then $\mu(i)$ denotes the number of times an element $i \in \mathcal{S}$ is repeated, it is called the \emph{multiplicity} of $i$.
For $\mathcal{S}=[n] \coloneqq \left\lbrace 1, 2, \ldots, n\right\rbrace$ we write $\left\lbrace 1^{\mu(1)}, 2^{\mu(2)}, \ldots, n^{\mu(n)}\right\rbrace$ instead of $(\mathcal{S}, \mu)$.
A \mul\ is called \emph{regular} in case $\mu(i)=m$ for all $i \in \mathcal{S}$ and some positive integer $m$.
We use the notation $[n]_m$ for $\left\lbrace 1^{m}, 2^{m}, \ldots, n^{m}\right\rbrace $.
A \emph{permutation} $\sigma$ on a multiset $\left\lbrace 1^{\mu(1)}, 2^{\mu(2)}, \ldots, n^{\mu(n)}\right\rbrace$ is then a sequence of length $\sum_{i \in \mathcal{S}}{\mu(i)}$ in which \hyphenation{every} every element $i \in [n]$ appears exactly $\mu(i)$ times.
The set of all permutations on $[n]_m$ is denoted by $\mathcal{S}_{n,m}$.
The \emph{length} of a permutation on $(\mathcal{S}, \mu)$ is the length of the corresponding sequence, i.e., $l=\sum_{i \in \mathcal{S}}{\mu(i)}$.
\label{Def:multiset}
\end{Def}

The notion of pattern avoidance respectively containment can be extended to \pe s on \mul s in a straightforward way and Definition~\ref{def_avoidance_pes} can also be applied to permutations on multisets. %
For a certain pattern to be contained in a \pe , repetitions in the pattern have to be represented by repetitions in the \pe . 
\begin{Ex}
The \mul -\pe\ $\sigma =11242334$ as depicted in Figure~\ref{pattern.avoidance.multi.grid} contains the ordinary pattern $\pi =132$ and the multiset-pattern $\pi = 122$ but avoids $\pi = 221$. 
\end{Ex}

Given a set of patterns $\Pi$, we denote by $\mathcal{S}_\mu(\Pi)=\mathcal{S}_{\mu(1), \mu(2), \ldots , \mu(n)}(\Pi)$ the set of all permutations on the multiset $([n],\mu)$ avoiding any of the patterns in $\Pi$. 
The cardinality of $\mathcal{S}_\mu(\Pi)$ is then denoted by $s_\mu(\Pi)$.
For the special case of regular multisets $[n]_m$, we write $\mathcal{S}_{n,m}(\Pi)$ and $s_{n,m}(\Pi)$.

The study of such restricted permutations on multisets, generalizing the ordinary ones, started only a few years ago. 
We mention here the work of Savage and Wilf \cite{savage2006pattern}, Myers \cite{myers2007pattern}, Heubach and Mansour \cite{heubach2006avoiding}, Albert et al. \cite{albert2001permutations} and Kuba and Panholzer \cite{kuba2010enumeration}. 
Studying patterns in permutations on multisets is of particular interest because of the connection to patterns in other combinatorial objects such as compositions and partitions, see e.g.\ \cite{heubach2006avoiding}.

Let us mention that in parallel to restricted permutations on multisets, \textit{restricted words} have been studied over the last few years. 
As introduced in Definition~\ref{Def:multiset}, permutations on a multiset $\left\lbrace 1^{\mu(1)}, 2^{\mu(2)}, \ldots, n^{\mu(n)}\right\rbrace$ are sequences of length $l=\sum_{i \in \mathcal{S}}{\mu(i)}$ containing every element $i \in [n]$ exactly $\mu(i)$ times. 
If we drop this restriction, that is to say if we allow for sequences of length $l$ containing every element $i \in [n]$ any possible number of times, i.e., from $0$ up to $l$ times, we obtain \textit{words} of length $l$ over the alphabet $[n]$.
In the same way as for permutations on multisets, we can define pattern containment and avoidance for words.
The study of patterns in words has indeed become a topic of intensive research within the last years as witnessed by the monograph \textit{Combinatorics of compositions and words} by Heubach and Mansour \cite{heubach2009combinatorics}.
Every word can be seen as a permutation of a certain mutlisets and vice versa every permutation on a multiset is also a word.
Thus the objects studied within these two branches of pattern avoidance are the same, whereas the way of counting them differs.
In this paper the enumerative results do not concern patterns in words but restricted permutations on multisets.
However, the last section of this article treats a generalization of the Stanley-Wilf conjecture both to permutations on multisets and to words.

\subsection{Results}

In this article, we close some gaps in the mentioned work for the special case of \textit{regular multisets}, completing the enumeration of permutations on regular multisets avoiding any possible combination of two patterns of length three. 
In all studied cases (there are seven of them), we obtain exact enumeration formul\ae\ and well-known sequences such as (generalized) Catalan numbers appear. 
Note that this is in contrast to the previous work on permutations on general multisets avoiding some pattern of length three.
Both in \cite{albert2001permutations} and  \cite{heubach2006avoiding} where permutations on multisets avoiding ordinary respectively multiset-patterns of length three were studied, many of the cases did not lead to exact enumeration formul\ae\ but merely to recursive results.

\begin{table}[h]
\begin{center}
\newcolumntype{C}{ >{\centering\arraybackslash} m{0.255\textwidth} }
\newcolumntype{D}{ >{\centering\arraybackslash} m{0.24\textwidth} }
\newcolumntype{E}{ >{\centering\arraybackslash} m{0.42\textwidth} }
\footnotesize{
\begin{tabular}{ccc}
\hspace{-0.25cm}
\begin{tabular}[t]{|C|} \hline 
\zeill{} Pairs of ordinary patterns   \\ \hline \hline

\vspace{0.1cm}   $s_{n,m}(123, 132)$: \newline no explicit formula, recursion in Section 4.1 in \cite{albert2001permutations} \\ \hline

\vspace{0.1cm}   $s_{n,m}(123, 231)=$ $\binom{nm}{m}+\binom{n-1}{2}m^2$  Section 4.2 in \cite{albert2001permutations}  \\ \hline

\vspace{0.1cm}  $s_{n,m}(123, 321)=\begin{cases} (m+1)^2 c_m &  n=3, \\ 	2(m+1) c_m &  n=4, \\ 0 &  n \geq 5.\end{cases}$  Section 4.3 in  \cite{albert2001permutations} \\ \hline

\vspace{0.1cm} $s_{n,m}(132, 213):$ \newline no explicit formula, recursion in Section 4.4 in  \cite{albert2001permutations} \\ \hline

\vspace{0.1cm} $s_{n,m}(132, 231)  =  c_m \cdot (m+1)^{n-2}$  Section 4.5 in  \cite{albert2001permutations} \\ \hline

\vspace{0.1cm}  $s_{n,m}(132, 312)=$   $\sum\limits_{i=1}^{n-1}{\binom{mn}{mi}}-\sum\limits_{i=1}^{n-2}{\binom{mn-m}{mi}}$ Section 4.6 in  \cite{albert2001permutations} \\ \hline 
\end{tabular}
 \hspace{-0.3cm}
&
\begin{tabular}[t]{|D|} \hline 
\zeill{} Pairs of multiset-patterns  \\ \hline \hline

\zeil{} $s_{n,m}(212, 221)=1$ \newline Theorem 5.5 in \cite{heubach2006avoiding} \\ \hline

\zeil{} $s_{n,m}(212, 121)=n!$ \newline Theorem 5.10 in \cite{heubach2006avoiding} \\ \hline

\zeil{} \cellcolor{black!10}$s_{n,m}(122, 112)=$ \vspace{-0.2cm}\newline  $\begin{cases} 2^{n-1}, & m \geq 3,\\ c_n, & m=2.\end{cases}$ \vspace{0.1cm}\newline Theorems \ref{Thm.m>2.112,122} and \ref{Thm.112,122=cn} \\ \hline

\zeil{} $s_{n,m}(122, 121)=c_n$ \newline Theorem 5.9 in \cite{heubach2006avoiding}\\ \hline

\zeil{} $s_{n,m}(122, 211)=0$ \newline Theorem 5.11 in \cite{heubach2006avoiding} \\ \hline

\vspace{0.2cm} $s_{n,m}(122, 221)= $ \newline  $ \begin{cases} 1 &  m=2, \\ 	0 & m \geq 3.				\end{cases}$ \vspace{0.1cm} \newline  Theorem 5.7 in \cite{heubach2006avoiding} \\ \hline
\end{tabular}
&
\hspace{-0.3cm}
\begin{tabular}[t]{|E|} \hline 
\zeill{} Pairs of one ordinary and one multiset-pattern  \\ \hline \hline

\vspace{0.11cm} 
$s_{n,m}(212, 123)=$ $\sum\limits_{j=0}^{n}{\frac{\binom{n}{j}\binom{n+ (m-1)j-1}{n-j}}{n+1-j}}$  \newline Theorem 9 in \cite{kuba2010enumeration}.\\ \hline

\vspace{0.1cm} $s_{n,m}(212, 132)= c_{n,m},$ \vspace{0.1cm} \newline Theorem 7 in \cite{kuba2010enumeration}.\\ \hline

\vspace{0.12cm}  \cellcolor{black!10} $s_{n,m}(122, 123)=s_{n,m}(122, 132)=c_{n,m},$  Theorem \ref{Thm.gen.catalan}\\ \hline

\vspace{0.11cm}  \cellcolor{black!10} $s_{n,m}(211, 213)=$   $m s_{n-1,m}(122, 231) + s_{n-2,m}(122, 231)$ \vspace{-0.2cm} \newline Theorem \ref{Thm.211,213}, explicit formula in Proposition \ref{Thm.explicit.211,213} \\ \hline

\vspace{0.1cm} \cellcolor{black!10} $s_{n,m}(122, 213)=$ \newline  $ 2 s_{n-1,m}(122, 213) + s_{n-2,m}(122, 213)$ \vspace{0.175cm} \newline Theorem \ref{Thm.122,213}, explicit formula in Proposition \ref{coeff122,213}\\ \hline

\vspace{0.11cm} \cellcolor{black!10} $s_{n,m}(122, 312)= \left(n-1\right)\cdot m +1$ \newline Theorem \ref{Thm.122,312} \\ \hline

\vspace{0.2cm} \cellcolor{black!10}  $s_{n,m}(122, 321)= \begin{cases} 1 &  n=1, \\ 	m+1 &  n=2, \\ 0 &  n \geq 3.				\end{cases}$ \newline Theorem \ref{Thm.122,321} \\ \hline
\end{tabular} \\ 

\end{tabular}
}
\end{center} 
\caption{Enumerative results concerning permu\-tations on regular multisets avoiding any of the possible pairs of patterns of length three. The results marked in gray are new and are proved in this work.}
\label{tab:all.results} 
\end{table}

The enumerative results for permutations on regular multisets avoiding a pair of patterns of length three are summarized in Table \ref{tab:all.results} in which our new results have been marked in gray. 
The cases presented here are representatives of Wilf-equivalence classes of pattern-restricted permutations. 
By this, the following is meant: two sets of patterns $\Pi_1$ and $\Pi_2$ are said to be \textit{Wilf-equivalent} if $s_{n,m}(\Pi_1)=s_{n,m}(\Pi_2)$ holds for all $m$ and $n$. 

Wilf-equivalence can be obtained for symmetry reasons for many pairs of patterns: For a given permutation $\sigma$ (on an ordinary set or on a multiset $([n],\mu)$), we call the permutation $\sigma^r$ that is obtained by reading $\sigma$ from right to left its \textit{reverse} and the permutation $\sigma^c$ that is obtained by replacing $i$ by $n-i+1$ its \textit{complement}. 
If a permutation $\sigma$ avoids a certain pattern $\pi$, then its reverse $\sigma^r$ avoids the pattern $\pi^r$ and its complement $\sigma^c$ avoids the pattern $\pi^c$. 
This holds not only for single permutations, but also for sets of permutations and for \pes on regular\footnote{Note that $\sigma^c$ is a \pe\ on the regular multiset $[n]_m$ iff $\sigma$ is a \pe\ on $[n]_m$. This does not hold for multisets in general.} multisets. Therefore $\pi$ and $\pi^c$ as well as $\pi^r$ are Wilf-equivalent for all patterns $\pi$. 
This reduces the number of pairs that have to be considered from 66 to 20.
For pairs of patterns consisting of one ordinary and one multiset-pattern, we obtain the following equivalence classes: 
$\left\lbrace (212, 123)\right\rbrace $ and $\left\lbrace (212, 132)\right\rbrace $ (already studied by Kuba and Panholzer), $\left\lbrace (122, 123)\right\rbrace $, $\left\lbrace (122, 132)\right\rbrace $, $\left\lbrace (211, 213)\right\rbrace $, $\left\lbrace (122, 213)\right\rbrace $, $\left\lbrace (122, 312)\right\rbrace $ and $\left\lbrace (122, 321)\right\rbrace $.

In all cases, it is assumed that $m \geq 2$ since $m=1$ leads to the case of ordinary permutations which has been treated in \cite{simion1985restricted}. 
In this table, $c_n=\frac{1}{n+1}\binom{2n}{n}$ denotes the $n$-th Catalan number and $c_{n,m}=\frac{1}{m\cdot n+1}\binom {mn+n}{n}$ is the $(n,m)$-th generalized Catalan number that will be introduced in Section \ref{sec.122,123}. 
Note that Catalan numbers already occurred in the context of ordinary  restricted permutations.
Also note that the case that one of the forbidden patterns is $111$ is omitted in the table, since this can be treated very easily.
On the one hand, if $m=2$, all permutations avoid $111$ and thus $s_{n,2}(111,\pi)=c_n$ for all ordinary patterns $\pi$ of length three and all $n\geq 0$.
On the other hand, if $m>2$, all permutations contain $111$ and thus $s_{n,m}(111,\pi)=0$ for all ordinary patterns $\pi$ of length three and all $n\geq 0$.

\subsection{Organization}

In the work of Heubach and Mansour \cite{heubach2006avoiding} the case of $(122,112)$-avoiding permutations remained open.
We start by closing this gap for the special case of regular multisets.
This result is followed by the Wilf-equivalence of the two pairs of patterns $(122,123)$ and $(122,132)$. 
The subsequent sections present enumeration formul\ae\ for the pairs of patterns $(122,123)$, $(122,213)$, $(122,231)$, $(122,312)$ as well as $(122,321)$. 
We conclude this paper by stating the fact that a generalization of the former Stanley-Wilf conjecture also holds for permutations on multisets.
This section concerns permutations on general multisets as well as words.

\subsection{Methods}

When proving enumeration formul\ae\ for $[n]_m$-\pes avoiding certain patterns, we will often use an enumeration technique called \textit{generating trees} that was introduced in the study of Baxter \pes and has been systematized by Julian West in \cite{west1996generating} in the context of pattern avoidance.

A family of combinatorial objects can often be identified with its generating tree, a rooted, labelled tree that captures the recursive structure of the studied family.
In this tree, the nodes correspond to the studied objects and the height of the nodes, i.e., the distance from the root node, corresponds to their size. 

To be more precise, a \textit{generating tree} is a rooted, labelled tree having the property that the labels of the children of any given node $i$ can be determined by the label of $i$ itself. 
Therefore every generating tree can uniquely be described by a recursive definition, the so-called \textit{rewriting rule}, consisting of:
\begin{itemize}
\item the label of its root (this corresponds to the basis of an induction),
\item a set of succession rules, explaining how to derive the number of children and their labels for any given parent node when the label of the parent is given (this corresponds to the induction step).
\end{itemize}

These generating trees then lead to equations for (bivariate) generating functions that can often be solved using the so-called \textit{kernel method}. This method was first used as a trick to solve functional equations; one possible origin can be found in an exercise of Knuth \cite{knuth1968art} (as a matter of fact, this exercise also dealt with stack-sortable permutations and is followed by the one showing that these are exactly the 231-avoiding permutations). It is mainly thanks to Bousquet-M\'{e}lou, Flajolet, Petkov\v{s}ek et al., see \cite{banderier2002generating} and \cite{DBLP:journals/dm/Bousquet-MelouP00}, that the \textit{kernel trick} was placed on solid grounds and may now be used as a method for solving various functional equations. This method was amongst many others used by Bousquet-M\'{e}lou who attacked and solved further enumeration problems for pattern restricted permutations such as vexillary permutations and Baxter permutations \cite{DBLP:journals/combinatorics/Bousquet-Melou02}.
For details and an application of the kernel method, see the proof of Theorem~\ref{Thm.112,122=cn}.

Besides generating trees, generating functions and the kernel method, we will also use bijective proofs that will provide supplementary insight for two of the studied pairs of patterns.

\section{Avoiding the patterns $122$ and $112$}
\label{section112,122}

For the case of regular multisets,  we close the gap in the work of Heubach and Mansour \cite{heubach2006avoiding}, deducing enumeration formulae for \pes on multisets avoiding both the pattern $112$ and the pattern $122$. We differ from  their work by restricting ourselves to regular multisets, i.e., multisets where every element occurs the same number of times. 

\begin{Thm}
For $m \geq 3$, $s_{n,m}(112,122)=2^{n-1}$.
\label{Thm.m>2.112,122}
\end{Thm}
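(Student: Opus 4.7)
The plan is to prove the recursion $s_{n,m}(112,122) = 2\, s_{n-1,m}(112,122)$ by induction on $n$, with base case $s_{1,m}(112,122) = 1$ (the sole permutation $1^m$); this immediately yields $2^{n-1}$. The idea is that the prefix of length $m$ of any $\sigma \in \mathcal{S}_{n,m}(112,122)$ is almost fully determined, and the entry at position $m$ provides a binary split.

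First, I would show that positions $1, \ldots, m-1$ of $\sigma$ are all equal to $n$. If some smaller value $a$ appeared at a position $p \leq m-1$, then at most $p-1 \leq m-2$ copies of $n$ could precede it, so, using $m \geq 3$, at least two copies of $n$ would follow it, producing an $a\,n\,n$ copy of $122$. An analogous argument forces the entry at position $m$ into $\{n-1, n\}$: for any smaller value $v$, all $m \geq 2$ copies of $v+1$ are pushed to positions $\geq m+1$ (since positions $1,\ldots,m-1$ are $n$'s and position $m$ is $v$), and $v$ together with two of them gives a $122$-pattern.

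I would then split on the value at position $m$. If it equals $n$, then $\sigma = n^m \sigma'$ for a unique $\sigma'$ on $[n-1]_m$, and a short verification shows $\sigma$ avoids $\{112,122\}$ iff $\sigma'$ does (any forbidden pattern in $\sigma$ with largest entry $n$ would need its two repeated $a$'s in the initial $n$-block, forcing $a=n$). If the entry at position $m$ equals $n-1$, a parallel case analysis forces position $m+1$ to equal $n$: a second $n-1$ at position $m+1$ would demand all $m$ copies of $n$ in the first $m$ slots, but only $m-1$ are available; and any value below $n-1$ at position $m+1$ would again yield a $122$-pattern. Hence $\sigma = n^{m-1}(n-1)\,n\,\tau$, and the map $\sigma \mapsto \sigma^* := (n-1)\tau$ lands in $\mathcal{S}_{n-1,m}(112,122)$ because $\sigma^*$ is a subsequence of $\sigma$. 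Conversely, applying the forced-prefix argument to any $\sigma^* \in \mathcal{S}_{n-1,m}(112,122)$ shows its first letter is $n-1$, so the map is invertible.

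The step requiring the most care is verifying that the reconstruction $\sigma = n^{m-1}(n-1)\,n\,\tau$ really avoids $\{112,122\}$ whenever $\sigma^*$ does. One must check, for every placement of the three entries of a hypothetical forbidden pattern (in the leading block $n^{m-1}$, at the two singletons at positions $m$ and $m+1$, or inside $\tau$), that the pattern either descends to a forbidden pattern of $\sigma^*$ or is precluded by the scarcity of $n$'s and $n-1$'s outside $\tau$. Once this is settled, the two cases partition $\mathcal{S}_{n,m}(112,122)$ into two sets each in bijection with $\mathcal{S}_{n-1,m}(112,122)$, closing the induction.
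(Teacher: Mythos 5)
Your proposal is correct and is essentially the paper's own argument read in the opposite direction: where you delete the largest value and show the prefix of $\sigma$ is forced to be either $n^{m}$ or $n^{m-1}(n-1)n$, the paper inserts the $m$ copies of the new largest element into a smaller permutation and shows there are exactly two admissible placements (all at the front, or all but one at the front with the remaining copy right after the first letter). Both routes establish the same bijective two-to-one correspondence and the recursion $s_{n,m}(112,122)=2\,s_{n-1,m}(112,122)$, so the proofs coincide in substance.
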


\begin{proof} 
By induction over $n$.
For $n=1$ it is clear that $\mathcal{S}_{1,m}(112,122)=\left\lbrace 11\ldots 1\right\rbrace $.

Let $\sigma$ be a permutation in $\mathcal{S}_{n,m}$ which avoids $112$ and $122$. The element $\left(n+1\right)$ should be inserted $m$ times - in correct positions - in order to produce an element of $\mathcal{S}_{n+1,m}$ avoiding the given patterns. 
First observe that the permutation $\sigma$ has to have at least two $n$'s at its beginning, otherwise it will be of the form $\sigma_i \ldots n \ldots n \ldots$ or $n \sigma_j \ldots n \ldots n \ldots$ where $\sigma_i$ and $\sigma_j$ are smaller than $n$ and thus $(\sigma_i n n)$ respectively $(\sigma_j n n)$ forms a 122-pattern.

Let us insert the $m$ copies of the element $(n+1)$ one after another without producing the forbidden patterns.
When inserting the first $\left(n+1\right)$ there are two possibilities: it can be placed either at the beginning of the permutation or directly after the first $n$. Otherwise, i.e., if it were to be placed somewhere after the second $n$, the permutation would be of the form $n n \ldots (n+1) \ldots$ and contain a 112-pattern. 
For the second, third and any following $\left(n+1\right)$ there is no other choice than to place them at the beginning of the permutation, otherwise a $122$-pattern would be created. 
Therefore every element in $\mathcal{S}_{n,m}(112,122)$ leads to two elements in $\mathcal{S}_{n+1,m}(112,122)$.
Thus, using the induction hypothesis, one obtains $s_{n+1,m}(112,122)=2\cdot s_{n,m}(112,122)=2\cdot 2^{n-1}=2^{n}.$ 
\end{proof}

We now deal with the case $m=2$.
The idea of the proof of  Theorem \ref{Thm.112,122=cn} is to describe the $(112, 122)$-avoiding \pes by means of their generating tree. This   will then lead to an equation for their generating function that can be solved with the help of the kernel method. In addition to this proof, we provide a bijection from $(112,122)$-avoiding \pes to Dyck words, giving further insight into this result.

\begin{Thm}
For $m=2$, $s_{n,m}(112,122)=c_n$, where $c_n$ denotes the $n$-th Catalan number.
\label{Thm.112,122=cn}
\end{Thm}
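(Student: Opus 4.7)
The plan is to capture the recursive structure of $(112,122)$-avoiding $[n]_2$-permutations by a generating tree, translate its succession rule into a functional equation for a bivariate generating function, and apply the kernel method to recover the Catalan generating function. A short bijection with Dyck words will give a complementary combinatorial interpretation.

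I would build the tree by inserting the two copies of the new maximum $n$ into an element of $\mathcal{S}_{n-1,2}(112,122)$. The argument already used in Theorem~\ref{Thm.m>2.112,122} forces the first $n$ into position $1$, for otherwise some $a<n$ precedes both $n$'s and produces the $122$ pattern $a\,n\,n$. The only extra constraint on the second $n$ is to avoid new $112$ patterns $a\,a\,n$, which rules out second-$n$ positions beyond which some value already occurs twice among the entries between the two $n$'s. Writing $\ell(\sigma)$ for the length of the longest prefix of $\sigma$ with pairwise distinct entries, the second $n$ may therefore be placed at any of the positions $j\in\{2,3,\ldots,\ell(\sigma)+2\}$, and a short computation shows that the child obtained at position $j$ has label $\ell(\sigma')=j-1$. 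This yields the succession rule
\[
(\ell)\;\longrightarrow\;(1)(2)\cdots(\ell+1), \qquad \text{root }(1),
\]
which is the classical generating tree for the Catalan numbers.

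Setting $F(x,u)=\sum_{n,\ell\geq 1}f_{n,\ell}\,x^n u^{\ell}$ and summing a geometric series in $u$, the succession rule is equivalent to
\[
F(x,u)\,(1-u+xu^2)\;=\;xu(1-u)+xu\,F(x,1).
\]
The kernel $1-u+xu^2$ has the small root $u_0(x)=(1-\sqrt{1-4x})/(2x)$, the Catalan generating function. Substituting $u=u_0$ kills the left-hand side and leaves $F(x,1)=u_0(x)-1=\sum_{n\geq 1}c_n x^n$, which gives $s_{n,2}(112,122)=c_n$.

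As a complement I would sketch the bijection with Dyck words: encode $\sigma$ by the length-$2n$ word in $\{U,D\}$ marking each position by $U$ if it is the first occurrence of its value and by $D$ otherwise; the Dyck prefix condition is automatic. Two one-line pattern arguments provide the needed monotonicity statements: the first occurrences of the values $n,n-1,\ldots,1$ appear in that order (else $a,b,b$ with $a<b$ yields $122$), and so do the second occurrences (else $a,a,b$ with $a<b$ yields $112$). Consequently the $i$-th $U$ and the $i$-th $D$ both carry the value $n+1-i$, the reconstruction from any Dyck word is uniquely determined, and the same monotonicities show that the reconstructed word avoids both patterns. The main obstacle of the whole argument is establishing the succession rule rigorously, i.e., the bookkeeping of $\ell(\sigma)$ under insertion of the second $n$; once this structural lemma is in place the kernel computation and the bijective verification are essentially routine.
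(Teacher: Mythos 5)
Your proof is correct and takes essentially the same approach as the paper: the same generating tree (your label $\ell(\sigma)$ is the paper's first-repetition position $r_\sigma$ shifted by one, so the succession rule, the kernel $1-u+xu^2$ and the small root are identical up to that relabelling), followed by the same kernel-method computation and the same first-occurrence/second-occurrence encoding by Dyck words.
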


\begin{proof} 
We first describe the generating tree of $(112,122)$-avoiding permutations and then derive the coefficients of its generating function using the \textit{kernel method}.

Generating tree: Starting with a permutation $\sigma \in S_{n,m}$ which avoids $112$ and $122$, we determine where the two new elements $\left(n+1\right)$ may be inserted without producing the forbidden patterns.

Let us start by inserting one copy of $(n+1)$: 
This element may only be inserted before the first repetition of any element of $[n]$.
Otherwise, the new permutation would be of the form $\ldots \sigma_i \ldots \sigma_i \ldots (n+1) \ldots$ and thus contain a $112$-pattern. 
Let $r_\sigma$ denote the position of the first repetition in the permutation $\sigma$, i.e., $r_\sigma=\text{min}\left\{r \in [2n] | \exists \text{ } i<r: \sigma_i=\sigma_r\right\}$. 
For the empty permutation $\epsilon$ we set $r_\epsilon \coloneqq 1$. 
Now, if $r_\sigma=i$, the first $\left(n+1\right)$ can be placed in front of any of the first $i$ elements and thereby yield $i$ different permutations. 
Now that one copy of $(n+1)$ has been placed, there is only one possibility for the second copy.
The second $\left(n+1\right)$ may only be placed at the beginning of the permutation, otherwise a 122-pattern is created. 

For example, if $\sigma=2121$, $r_\sigma=3$ and we have three choices for placing the first $3$, namely $32121$, $23121$ and $21321$. 
The second $3$ must be placed at the beginning, so $\sigma$ gives us three permutations in $\mathcal{S}_{3,2}$ which avoid both patterns $112$ and $122$: $\sigma'=332121$, $\sigma''=323121$ and $\sigma'''=321321$ with $r_{\sigma'}=2$, $r_{\sigma''}=3$ and $r_{\sigma'''}=4$.

When placing two new elements in a permutation $\sigma$ with $r_\sigma=i$, we obtain one permutation $\tilde{\sigma}$ each with $r_{\tilde{\sigma}}=j$ for all $j \in \left\{2, \ldots, i+1 \right\}$. We can sum up these results in the generating tree of $(112,122)$-avoiding permutations. Its nodes are labelled by the position of the first repetition, i.e., with $r_\sigma$, as shown in Figure~\ref{perm}. 

\begin{figure}
\begin{minipage}[t]{\textwidth}
\centering
\vspace{0pt}
\begin{tikzpicture}[scale=1.2]
\tikzstyle{every node}=[font=\tiny] 
\tikzstyle{level 1}=[level distance=1cm, sibling distance=2.25cm] 
\tikzstyle{level 3}=[level distance=1cm, sibling distance=0.53cm] 
\tikzstyle{level 4}=[level distance=1.5cm, sibling distance=0.68cm] 
\tikzstyle{level 5}=[level distance=0.3cm]

\node (root) [font=\small]{$\epsilon$} child { node {1\unt{1}} child { node  {2\unt{2}11} 
child {
	node (1){3\unt{3}2211}
	child {
		node {4\unt{4}332211}
	}
	child[draw=white] {
		node {}
		child[draw=white] {node (2){43\unt{4}32211}}
	}
}
child[missing]
child [missing]
child {
	node (3) {32\unt{3}211}
	child {
		node {4\unt{4}323211}
	}
	child [draw=white] {
		node {}
		child[draw=white] { node (4) {43\unt{4}23211}}
	}
	child {
		node {432\unt{4}3211}
	}
}
}
child [missing]
child {
node  {21\unt{2}1}
child {
	node (5){3\unt{3}2121}
	child [draw=white]{
		node {}
		child[draw=white] { node (6){4\unt{4}332121}}
	}
	child {
		node {43\unt{4}32121}
	}
}
child [missing]
child [missing]
child [missing]
child {
	node (7){32\unt{3}121}
	child {
		node {4\unt{4}323121}
	}
	child [draw=white] {
		node {}
		child [draw=white] {node (8){43\unt{4}23121}}
	}
	child {
		node {432\unt{4}3121}
	}
}
child [missing]
child [missing]
child [missing]
child {
	node (9) {321\unt{3}21}
	child[draw=white] {
		node {}
		child [draw=white] {node (10) {4\unt{4}321321}}
	}
	child {
			node {\color{white}-- \color{black}43\unt{4}21321}
	}
	child[draw=white] {
		node {}
		child [draw=white] {node (11){\textcolor{white}{456}432\unt{4}1321}}
	}
	child {
			node {4321\unt{4}321}
	}
}
}
}
;
\draw (1) -- (2) ;
\draw (3) -- (4) ;
\draw (5) -- (6) ;
\draw (7) -- (8) ;
\draw (9) -- (10) ;
\draw (9) -- (11) ;
 
\end{tikzpicture} 
\end{minipage}
\begin{minipage}[t]{\textwidth}
\vspace{0pt}\raggedright
\centering
\vspace{1cm}
\begin{tikzpicture}[scale=1.4]
[font=\footnotesize]
\tikzstyle{every node}=[rectangle, draw=black, text centered] 
\tikzstyle{level 1}=[level distance=1cm] 
\tikzstyle{level 2}=[ sibling distance=4cm, level distance=1cm] 
\tikzstyle{level 3}=[ sibling distance=1.75cm, level distance=1cm]
\tikzstyle{level 4}=[ sibling distance=0.48cm, level distance=1.5cm]
\node (root)  {1} child { node  {2} child { node  {2} 
child {
	node {2}
	child {
		node {2}
	}
	child {
		node {3}
	}
}
child {
	node {3}
	child {
		node {2}
	}
	child {
		node {3}
	}
	child {
		node  {4}
	}
}
}
child {
node  {3}
child {
	node {2}
	child {
		node {2}
	}
	child {
		node {3}
	}
}
child {
	node {3}
	child {
		node {2}
	}
	child {
		node {3}
	}
	child {
		node {4}
	}
}
child {
	node {4}
	child {
		node {2}
	}
	child {
			node {3}
	}
	child {
			node {4}
	}
	child {
			node {5}
	}
}
}
}
; 
\end{tikzpicture} 
\vspace{0.5cm}
\end{minipage}
\caption{Generating tree of $(112,122)$-avoiding permutations on regular multisets with $m=2$. In the top tree the first repetition is underlined in every permutation; in the bottom tree the nodes are labelled by the position $r_\sigma$ of the first repetition.}
\label{perm}
\end{figure}

The generating tree of these permutations can be described by the simple rewriting rule:
\begin{align}
\notag  (1) & \\
(r) & \longrightarrow (2)(3)\ldots (r)(r+1) 
\label{rew.rule.112,122}
\end{align}
Note that this is the same generating tree as used by Bousquet-M\'elou in \cite{DBLP:journals/combinatorics/Bousquet-Melou02} for $123$-avoiding permutations, thus already implying that $(112,122)$-avoiding permutations are counted by the Catalan numbers.
We will however perform this proof anyway, in order to demonstrate the proof method used later on.

Generating function: Let 
\[
S(z,u) =	\sum_{\sigma \in \mathcal{S}_{n,2}(112,122)} {z^{s_\sigma}u^{r_\sigma}} = \sum_{n,k \geq 0}{s(n,k)z^n u^k}
\]
be the associated generating function, counting the nodes of the tree by their height and their label, respectively counting the $(112,122)$-avoiding permutations $\sigma$ by the size of their alphabet $s_\sigma$ and the position $r_\sigma$ of their first repetition. 

Using the rewriting rule (\ref{rew.rule.112,122}), one obtains:
\[
S(z,u)  =  u + zu^2 \sum_{n,k \geq 0}{s(n,k)z^n \frac{1-u^k}{1-u}} = u + zu^2 \frac{S(z,1)-S(z,u)}{1-u}.
\]
Rewriting this equation gives:
\begin{align}
K(z,u) \cdot S(z,u) = u\left(1-u+zuS(z,1)\right),
\label{eqn.kernel.112,122}
\end{align}
where $K(z,u)=zu^2-u+1$ is called the \textit{kernel} of the equation. This equation can be solved using the \textit{kernel method}. The roots of $K(z,u)$ are:
\[
u_1(z) = \frac{1 + \sqrt{1-4z}}{2z} \text{ and } u_2(z) = \frac{1 - \sqrt{1-4z}}{2z}.
\]
Note that $\lim_{z \to 0}u_1(z) \rightarrow \infty$ and $\lim_{z \to 0}u_2(z) \rightarrow 1$.
Therefore $u_2(z)$ is the only of the two roots of $K(z,u)$ that can be expanded into a power series in $z$ around $0$.
This root may be plugged into $S(z,u)$ since $S(z,u)$ is a series in $z$ with polynomial coefficients in $u$. 
We then obtain that the right-hand side of equation (\ref{eqn.kernel.112,122}) must vanish for $u=u_2(z)$. 
In particular, this means that $u_2(z) - 1 - z u_2(z) S(z,1)=0$, implying $S(z,1)=u_2(z)$.

It is well-known that this is the generating function of Catalan numbers:
\[
S(z,1) = \frac{1 - \sqrt{1-4z}}{2z} = \sum_{n \geq 1}{\frac{1}{n+1}\binom{2n}{n} z^n}.
\]
\end{proof}

We can also prove the result of Theorem \ref{Thm.112,122=cn} by giving a bijection from $\mathcal{S}_{n,2}(112,122)$ to the set of Dyck words of length $2n$. 

\begin{Def}
A \emph{Dyck word} of length $2n$ is a string consisting of $n$ X's and $n$ Y's such that no initial segment of the string has more Y's than X's. We denote by $\mathcal{D}_{n}$ the set of all Dyck words of length $2n$. 
\end{Def}

For example, the elements of  $\mathcal{D}_{3}$ are: XXXYYY, XXYXYY, XXYYXY, XYXXYY, XYXYXY. It is well-known that Dyck words are counted by the Catalan numbers. A famous proof of this result was given by Andr\'{e} \cite{andré1887solution}.

In the following, we construct a bijection from $\mathcal{S}_{n,2}(112,122)$ to $\mathcal{D}_n$.

Let us start with a Dyck word $w$ of length $2n$. The X's correspond to the first set of elements $1, \ldots, n$ and the Y's to the second set. Replace the sequence of X's by the decreasing sequence $n, n-1, \ldots, 2, 1$ and then do the same for the sequence of Y's. This leads to an element $\sigma$ of $\mathcal{S}_{n,2}$.

For example, given the Dyck word XYXXYXYY, one obtains the permutation $44323121$.

It remains to show that the corresponding permutation $\sigma$ indeed avoids the patterns 112 and 122 for every Dyck word $w$. 
The permutation $\sigma$ is formed by two decreasing subsequences $\left(n, n-1, \ldots, 2, 1\right)$ and therefore all elements to the left of the first occurrence of every $i \in [n]$ must be larger than $i$. 
This means that a 122-pattern is impossible. On the other hand, all elements to the right of the second occurrence of every $i \in [n]$ must be smaller than $i$. Thus a 112-pattern is also impossible.

Now let us construct a Dyck word $w$ from a given element $\sigma$ of $\mathcal{S}_{n,2}$ avoiding the patterns 112 and 122. This is easy: the first occurrences of all elements of $\left[n\right]$ are transformed into X's and the second ones into Y's.
The obtained word contains $n$ X's and $n$ Y's and clearly is a Dyck word.
Indeed, an initial segment with more Y's than X's could only be obtained if the \textit{second} occurrence of some element of  $\sigma$ appeared before the \textit{first} occurrence.

\begin{Rem}
We have found a closed formula for the number of \pes on \textit{regular multisets} avoiding the two patterns $112$ and $122$ simultaneously but the case of multisets in general still remains unsolved. 
The fact that we obtained three different enumeration formul\ae\ - one for $m=1$, one for $m=2$ and one for the case $m \geq 3$ - for the number of $(112,122)$-avoiding \pes indicates that it is difficult to find such a formula in the general case when using the methods presented here.
Indeed, if we try to proceed in the same way as in the proof of Theorem \ref{Thm.112,122=cn} and want to construct the generating tree of these permutations, we would have to derive succession rules that depend on the height of a node: the number of children (and their labels) of a node at the height $i$ depends on the multiplicity $\mu(i+1)$. 
\end{Rem}

\begin{Rem}
The case where all multiplicities are larger or equal to $3$ is easy to deal with since the observations made in the proof of Theorem \ref{Thm.m>2.112,122} are still valid.
This implies that the number of permutations on the multiset $\left\lbrace 1^{\mu(1)}, \ldots, n^{\mu(n)} \right\rbrace $ avoiding the patterns $112$ and $122$ is equal to $2^{n-1}$ in case $\mu_i \geq 3$ for all $i \in [n]$. 
\end{Rem}

\section{Wilf-equivalence of $\left\{122,123\right\}$ and $\left\{122,132\right\}$}
\label{sec.122,123.wilf.122,132}

\begin{Thm}
For all $n\in \mathbb{N}$ and all $m \in \mathbb{N}$ it holds that 
$s_{n,m}(122,123)=s_{n,m}(122,132).$
\label{Thm122,123=122,132}
\end{Thm}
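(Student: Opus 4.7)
The plan is to build the generating trees of $\mathcal{S}_{n,m}(122,123)$ and $\mathcal{S}_{n,m}(122,132)$ in the style of the proof of Theorem~\ref{Thm.112,122=cn}, and to show that with appropriate labels the two trees obey the same rewriting rule. Since every $(122)$
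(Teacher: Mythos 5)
Your proposal breaks off after announcing a strategy, so there is no proof here to check: the essential content is entirely missing. To carry out the generating-tree plan you would need to (i) construct the generating tree of $(122,132)$-avoiding permutations on $[n]_m$, which means identifying a statistic (the analogue of the position $a_\sigma$ of the first ascent used for $(122,123)$ in the proof of Theorem~\ref{Thm.gen.catalan}) whose value on a child is determined by its value on the parent, (ii) do the same for $(122,123)$, and (iii) verify that the two succession rules produce isomorphic trees, i.e.\ the same number of children with matching labels at every node. None of these steps is present, and step (i) is not routine: for $(122,132)$ the free copy of $n+1$ may be blocked by a non-adjacent pair $a\ldots b$ with $a<b$ straddling the insertion point, so the correct label is not simply the first-ascent position, and one must check that the resulting rule really coincides with $(a)\longrightarrow(m+a)(m+1)(m+2)\ldots(m+a-1)$ after a relabelling. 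Until that is done, the claim is unproved.

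For comparison, the paper takes a different and more economical route: it reuses the Simion--Schmidt map that fixes all left-to-right minima and rearranges the remaining entries in decreasing order, and then checks that this map preserves $122$-avoidance. The key structural input is that in any $122$-avoiding permutation on $[n]_m$ the entries $n^{m-1},(n-1)^{m-1},\ldots,1^{m-1}$ occur in this order as left-to-right minima, which forces any repeated value in a would-be $122$ pattern of the image to be a left-to-right minimum, a contradiction. If you prefer the generating-tree route, it can in principle also yield the enumeration $c_{n,m}$ directly (as the paper does separately in Section~4), but you must actually derive and match the two rewriting rules rather than assert that they will agree.
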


Many proofs have been given for the Wilf-equivalence of the patterns $123$ and $132$ for ordinary permutations. A  famous one can be found in the first systematic study of \textit{Restricted Permutations} \cite{simion1985restricted} and uses a bijection between $\mathcal{S}_{n}(123)$ and $\mathcal{S}_{n}(132)$ that keeps all \textit{left-to-right-minima} fixed. Given a permutation $\sigma$ on a (not necessarily regular) multiset, we call $\sigma_i$ a \textit{left-to-right-minimum} if $\sigma_i\leq \sigma_j$ holds for all $j < i$. 

\begin{proof}[Proof of Theorem \ref{Thm122,123=122,132}] We use the same map as in the proof of Simion and Schmidt \cite{simion1985restricted} and show that it is a bijection from $\mathcal{S}_{n,m}(122,132)$ to $\mathcal{S}_{n,m}(122,123)$: 
\begin{itemize}
\item Given a permutation $\sigma \in \mathcal{S}_{n,m}(122,132)$, the map $f$ keeps all left-to-right-minima fixed.
All other elements are removed from $\sigma$ and then inserted from left to right in the free positions in decreasing order.
\item Given a permutation $\tau \in \mathcal{S}_{n,m}(122,123)$, the inverse map $g$ also keeps the left-to-right-minima fixed.
The other elements are removed from $\tau$ and placed in the following way: at each free position from left to right, place the smallest element not yet placed that is larger than the closest left-to-right-minimum to the left of the given position. 
\end{itemize}
Let us give a simple example. Consider the permutation $\sigma=43421231$ on the regular multiset $[4]_2$. 
This permutation clearly avoids the two forbidden patterns $122$ and $132$.
The left-to-right minima are $4,3,2,1, 1$ and removing all other elements leads to the permutation 43\_21\_\_1.
We obtain $f(\sigma)$ by inserting the removed elements $4, 2, 3$ in decreasing order: $f(\sigma)=43421321$.
Again it is easy to check that $f(\sigma)$ avoids both $122$ and $123$.
Observe that the $123$-pattern in $\sigma$ has been transformed into a $132$-pattern in $f(\sigma)$.

In the proof of Simion and Schmidt it was shown that for a given 132-avoiding permutation $\sigma$, $f(\sigma)$ is the only 123-avoiding permutation with the same set and positions of left-to-right-minima and conversely, for a given 123-avoiding permutation $\tau$, $g(\tau)$ is the only 132-avoiding permutation with the same set and positions of left-to-right-minima. It remains to show that for a $(132,122)$-avoiding permutation $\sigma$, its image $f(\sigma)$ is not only 123- but also 122-avoiding and reversely, that $g(\tau)$ is indeed 122-avoiding for a $(123,122)$-avoiding permutation $\tau$.

Note that for any 122-avoiding permutation on the regular multiset $[n]_m$ the following elements  will always appear in this order and are left-to-right-minima:
\[
\underbrace{n, \ldots, n}_{\left(m-1\right)\text{ times}},\underbrace{n-1, \ldots, n-1,}_{\left(m-1\right)\text{ times}}\ \ldots, \underbrace{2, \ldots, 2}_{\left(m-1\right)\text{ times}}, \underbrace{1, \ldots, 1}_{\left(m-1\right)\text{ times}}
\]
The other elements of the permutation lie in between the ones listed above, respecting the following rule: the $m$-th copy of the element $i$, $i \in [n]$, always lies to the right of the $(m-1)$ copies of $i$ listed above.
Depending on where they lie, there might be more than these left-to-right minima. 

This can easily be seen by induction over $n$. 
For $n=1$ the induction hypothesis is trivially true. Suppose you are given a 122-avoiding permutation $\sigma$ on $[n-1]_m$ and want to introduce  $m$ copies of the element $n$ in order to produce a 122-avoiding permutation on $[n]_m$. As seen earlier, one element $n$ may occur anywhere in $\sigma$ and all the remaining $n$'s have to be placed at the beginning of the permutation. This block of $\left(m-1\right)$ $n$'s will of course be a block of left-to-right-minima. The previous left-to-right-minima will stay left-to-right-minima since all elements of $\sigma$ are smaller than $n$ which proves the induction hypothesis. Since the maps $f$ and $g$ keep the left-to-right-minima fixed, this observation is also true for $f(\sigma)$ and $g(\tau)$, if $\sigma$ and $\tau$ are $122$-avoiding permutations. 

Now let $\sigma$ be a $(122,123)$-avoiding permutation and suppose $f(\sigma)$ contains a $122$-pattern formed by some entries $xyy$. 
One of the copies of the element $y$ that are involved in the 122-pattern must be a left-to-right-minimum by the remark made above.
This is a contradiction to the fact that the element $x$ lies to left of $y$ and is smaller than $y$.
Therefore $f(\sigma)$ avoids the pattern $122$.
The same argument holds for $g(\tau)$, where $\tau \in \mathcal{S}_{n,m}(122,123)$: $g(\tau)$ avoids the pattern $122$.
\end{proof}

\section{Avoiding the patterns 122 and 123}
\label{sec.122,123}

\begin{Thm}
For $m \geq 1$ and $n \in \mathbb{N}$ it holds that 
\[
s_{n,m}(122,123)=\frac{1}{m\cdot n+1}\binom {\left(m+1\right)\cdot n}{n}=c_{n,m}.
\]
\label{Thm.gen.catalan}
\end{Thm}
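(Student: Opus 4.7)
The plan is to build a generating tree for $\mathcal{S}_{n,m}(122,123)$, mirroring the strategy of Theorem~\ref{Thm.112,122=cn}, and then to extract the generalized Catalan number $c_{n,m}$ via the kernel method and Lagrange inversion.

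First, I would analyse how a permutation $\sigma\in\mathcal{S}_{n,m}(122,123)$ can be extended to $\sigma'\in\mathcal{S}_{n+1,m}(122,123)$ by inserting the $m$ copies of $(n+1)$. Exactly as in the proof of Theorem~\ref{Thm.m>2.112,122}, $122$-avoidance forces $m-1$ copies of $(n+1)$ to occupy the leftmost $m-1$ positions of $\sigma'$, while the last copy is placed in some position $m+k$ with $k$ entries of $\sigma$ to its left. Since $(n+1)$ is the maximum, any new $123$-pattern in $\sigma'$ must end at this last $(n+1)$, and such a pattern exists iff the prefix $\sigma_1\sigma_2\cdots\sigma_k$ contains an ascent. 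Writing $\ell(\sigma)$ for the length of the longest weakly decreasing prefix of $\sigma$, the legal insertions are therefore in bijection with $k\in\{0,1,\ldots,\ell(\sigma)\}$, giving exactly $\ell(\sigma)+1$ children.

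Next, I compute the label $\ell(\sigma')$ of each child by inspecting the prefix $(n+1)^{m-1}\sigma_1\cdots\sigma_k(n+1)\cdots$. For $k=0$ all the $(n+1)$'s fit into the weakly decreasing prefix, yielding $\ell(\sigma')=m+\ell(\sigma)$; for $k\geq1$ the $m$-th copy of $(n+1)$ creates a strict ascent immediately after $\sigma_k$, yielding $\ell(\sigma')=m-1+k$. Taking the union over $k\in\{0,\ldots,\ell(\sigma)\}$, the labels of the children of a node with label $r$ run exactly over $m,m+1,\ldots,m+r$, so the rewriting rule becomes
\[
\text{Root: }(m),\qquad (r)\longrightarrow(m)(m+1)\cdots(m+r).
\]
A quick sanity check against $s_{1,m}(122,123)=1$ and $s_{2,m}(122,123)=m+1$ recovers $c_{1,m}$ and $c_{2,m}$.

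Setting $F(z,u)=\sum_{\sigma}z^{s_{\sigma}}u^{\ell(\sigma)}$, the rewriting rule translates into the functional equation
\[
\bigl((u-1)-zu^{m+1}\bigr)\,F(z,u)=zu^{m}\bigl((u-1)-F(z,1)\bigr).
\]
The kernel $K(z,u)=(u-1)-zu^{m+1}$ admits a unique root $u(z)$ that is a formal power series in $z$ with $u(0)=1$ and satisfies $u-1=zu^{m+1}$; substituting this root forces $F(z,1)=u(z)-1$. Writing $v(z)\coloneqq u(z)-1$, the defining relation becomes $v=z(1+v)^{m+1}$, and Lagrange inversion yields
\[
s_{n,m}(122,123)=[z^{n}]v(z)=\tfrac{1}{n}[v^{n-1}](1+v)^{n(m+1)}=\tfrac{1}{n}\binom{n(m+1)}{n-1}=\tfrac{1}{mn+1}\binom{(m+1)n}{n}=c_{n,m},
\]
completing the proof. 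The main obstacle is the combinatorial first step: identifying $\ell(\sigma)$ as the statistic that captures both avoidance conditions simultaneously and verifying the update rule for $\ell$ under insertion cleanly in both the $k=0$ and $k\geq1$ regimes. Once this is done, the kernel-method and Lagrange-inversion steps parallel the proof of Theorem~\ref{Thm.112,122=cn}.
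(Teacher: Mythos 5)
Your proposal is correct and follows essentially the same route as the paper: the same generating-tree analysis (your statistic $\ell(\sigma)$ is just $a_\sigma-1$, the paper's position of the first ascent shifted by one, and your rewriting rule $(r)\to(m)(m+1)\cdots(m+r)$ is the paper's rule $(a)\to(m+a)(m+1)\cdots(m+a-1)$ in disguise), followed by the same kernel-method and Lagrange-inversion computation yielding $\tfrac1n\binom{(m+1)n}{n-1}=c_{n,m}$. No substantive differences to report.
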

\begin{Rem}
The numbers $c_{n,m}=\frac{1}{m\cdot n+1}\binom {\left(m+1\right)\cdot n}{n}$ can be seen as one of many possible generalizations of the Catalan numbers. Indeed, $c_{n,1}=\frac{1}{ n+1}\binom {2 n}{n}$. These generalized Catalan numbers may also be seen as special cases of the so-called Rothe numbers  which are named after August Friedrich Rothe who was one of the first to investigate the properties of these sequences in \cite{rothe1793formulae}.
They are defined in the following way: $A_n(a,b)=\frac{a}{a+bn}\binom{a+bn}{n}$. It holds that  $A_n(1,m+1)=c_{n,m}$.
\label{rem.gen.cat}
\end{Rem}

\begin{proof}[Proof of Theorem \ref{Thm.gen.catalan}]
As in the proof of Theorem \ref{Thm.112,122=cn} we describe the generating tree of $(122,123)$-avoiding permutations and then derive the coefficients of its generating function using the kernel method.
Let $\sigma$ be a $[n]_m$-\pe\ avoiding the patterns 122 and 123, and let us insert $m$ copies of the element $(n+1)$ into $\sigma$ without producing one of the forbidden patterns. 
In order to avoid 122, $(m-1)$ occurrences of $\left(n+1\right)$ have to be inserted at the beginning of $\sigma$, and one $\left(n+1\right)$ may be placed anywhere. The $(m-1)$ copies of $\left(n+1\right)$ placed at the beginning will never be involved in a 123-pattern, so the only restriction we have is that the remaining $\left(n+1\right)$ may not be placed after an increasing subsequence of length two. 
In other words, if $a_\sigma$ is the position of the first ascent in $\sigma$, i.e., $a_\sigma= \text{min}\left\{i \in [n \cdot m]: \sigma_{i-1} < \sigma_{i}\right\}$, then the element $\left(n+1\right)$ may not be inserted after the $a_\sigma $-th position. 
For the empty \pe\ $\epsilon$ we set $a_\epsilon=1$ and for any \pe\ $\sigma$ with no ascents $a_\sigma=|\sigma|=n \cdot m+1$. 

One of the copies of $\left(n+1\right)$ may be inserted in front of any of the first $a_\sigma$ elements of $\sigma$ yielding $a_\sigma$ \pes $\tilde \sigma$ on $[n+1]_m$ that avoid both 122 and 123.
If this copy of $\left(n+1\right)$ is inserted at the beginning of $\sigma$, i.e., $\tilde \sigma$ starts with a block of $m$ $\left(n+1\right)$'s, no new ascents are created and $a_{\tilde \sigma}=a_{\sigma}+m$. 
If the position in $\sigma$ where the copy of $\left(n+1\right)$ is inserted is $j>1$, then $\sigma_{j-1}\left(n+1\right)$ always forms an ascent, and therefore $a_{\tilde \sigma}=m-1+j$.

The rewriting rule of the generating tree of $(122,123)$-avoiding permutations where the nodes are labelled by $a_\sigma$ is thus given by: 
\begin{align*}
\text{Root: }  (1) & \\
 (a) & \longrightarrow (m+a)(m+1)(m+2)\ldots (m+a-1) 
\end{align*}
Note the similarity to the rewriting rule of the generating tree of $122$- and $112$-avoiding \pes that are counted by the Catalan numbers (see the proof of Theorem \ref{Thm.112,122=cn}). 

For the associated generating function $S_m(z,u)=\sum_{n,a \geq 0}{s_{m}(n,a)z^nu^a}$ one obtains  with the help of the rewriting rule given above:
\begin{align*}
S_m(z,u)  & =  u + \sum_{\substack {n \geq 0 \\ a \geq 0}}{s_m(n,a)z^{n+1}\left(u^{a+m}+ u^{m+1} + \ldots + u^{a+m-1}\right)} \\
& =  u + z u^{m+1} \frac{S_m(z,1)-S_m(z,u)}{1-u},
\end{align*}
or equivalently: 
\begin{align}
K_m(z,u)S_m(z,u) = u(1-u+zu^mS_m(z,1)),
\label{gen.func.122,123}
\end{align}
where $K_m(z,u)=1-u+zu^{m+1}$ is the kernel of equation (\ref{gen.func.122,123}). 
This equation can again be solved using the \textit{kernel method}. 
As stated by Bousquet-M\'elou et al. in \cite{banderier2002generating}, this type of polynomial of degree $m+1$ has exactly one root $u_0(z)$ that can be expanded to a power series in $z$ around $0$, the other $m$ roots can be expanded to Laurent series in $z^{1/m}$ around $0$. %
This root may be plugged into $S_m(z,u)$, since $S_m(z,u)$ is a series in $z$ with polynomial coefficients in $u$. 
We then obtain that the right-hand side of equation \eqref{gen.func.122,123} must vanish for $u=u_0(z)$. 
In particular, this means that $1-u_0(z)+zu_0(z)^mS_m(z,1)=0$, implying $S_m(z,1)=u_0(z)$.

This root $u_0(z)$ may be developed into a power series using \textit{Lagrange's Inversion Formula}. 
From \cite{banderier2002generating} it is known that the constant term is $1$ and we can therefore write $u_0(z)=1+G$, where $G$ is a power series in $z$ with constant term $0$. Since $u_0(z)$ is a root of $1-u+zu^{m+1}$, we have $-G+z\left(1+G\right)^{m+1}=0$, implying
\[
G= z \phi (G), \text{where } \phi(G)=\left(1+G\right)^{m+1}.
\]
Note that $\phi(0)=1$ and $u_0(z)=1+G$ is a power series in $z$ and Lagrange's Inversion Formula can be applied. Noting that 
\[
\left(\phi(G)\right)^n=\left(1+G\right)^{\left(m+1\right)n}=\sum_{k \geq 0}{\binom{\left(m+1\right)n}{k}G^k} \text{ leads to}
\]
\begin{align*}
\left[z^0\right]1+G 	& =  \left[G^0\right]1+G=1, \\
\left[z^n\right]1+G 	& =  \frac1n \left[G^{n-1}\right]\left(\phi(G)\right)^n \\
						& =  \frac1n \binom{\left(m+1\right)n}{n-1} \text{ for } n \geq 1.
\end{align*}
Putting this together with the remarks made above, we conclude that
\[
\left[z^n\right]S_m(z,1)= \frac1n \binom{\left(m+1\right)n}{n-1} \text{ for } n \geq 1.
\]
To finish this proof, note that
\[
\frac1n \binom{\left(m+1\right)n}{n-1}=\frac{\left(\left(m+1\right)n\right)!}{n!\left(mn+1\right)!}=\frac{1}{mn+1}\binom{\left(m+1\right)n}{n}=c_{m,n}.
\]
\end{proof}

\begin{Cor}
For $m \geq 1$ and $n \in \mathbb{N}$ it holds that 
\[
s_{n,m}(122,132)=\frac{1}{m\cdot n+1}\binom {\left(m+1\right)\cdot n}{n}.
\]
\end{Cor}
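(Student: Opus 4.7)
The corollary is an immediate consequence of the two preceding results, so the plan is simply to chain them together. First I would invoke Theorem \ref{Thm122,123=122,132}, which establishes the Wilf-equivalence $s_{n,m}(122,123)=s_{n,m}(122,132)$ for all $n,m\in\mathbb{N}$ via the Simion--Schmidt-style left-to-right-minima bijection. This reduces the enumeration of $(122,132)$-avoiding permutations on $[n]_m$ to the enumeration of $(122,123)$-avoiding permutations on the same multiset.

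Then I would plug in Theorem \ref{Thm.gen.catalan}, which states
\[
s_{n,m}(122,123)=\frac{1}{mn+1}\binom{(m+1)n}{n}=c_{n,m},
\]
to obtain the claimed closed form. Combining the two equalities yields
\[
s_{n,m}(122,132)=s_{n,m}(122,123)=\frac{1}{mn+1}\binom{(m+1)n}{n},
\]
which is exactly the statement of the corollary.

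There is no genuine obstacle here: both the bijective transfer and the generating-tree/kernel-method computation have already been carried out. The only thing worth flagging is that the equality is valid for \emph{every} $m\geq 1$ and every $n\in\mathbb{N}$, since Theorem \ref{Thm122,123=122,132} is proved in this range and Theorem \ref{Thm.gen.catalan} is stated in the same range; thus no case analysis or boundary verification is required.
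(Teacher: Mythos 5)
Your proposal is correct and matches the paper exactly: the corollary is stated there without proof precisely because it follows immediately by chaining the Wilf-equivalence of Theorem~\ref{Thm122,123=122,132} with the enumeration in Theorem~\ref{Thm.gen.catalan}. Nothing further is needed.
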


With the help of the generating tree introduced in the proof above one can also construct a bijective proof of Theorem \ref{Thm.gen.catalan}. Indeed, one can show that $(122,123)$-avoiding multiset-\pes bijectively correspond to certain lattice paths.

\begin{Def}
For given integers $a, b $ and $n$ with $n \geq 0$, $a,b \geq 1$, $\mathcal{P}_n(a,b)$ denotes the set of all lattice paths from $(0,0)$ to $(a + bn, n)$ consisting of unit steps up -- $(0,1)$ -- and to the right -- $(1,0)$ -- not touching the line $\Delta: y = \frac{x-a}{b}$ except at the endpoint. 
\end{Def}

For an example of a lattice path in $\mathcal{P}_4(1,3)$, see the bottom picture of Figure~\ref{fig.14777}.

\begin{Rem}
It is obvious that Dyck words can also be interpreted as lattice paths from the origin $(0,0)$ to $(n,n)$ that lie above the line $y = x$ and may touch it, by translating a letter $X$ into a step up and a letter $Y$ into a step to the right and vice-versa. Such lattice paths are also called Dyck paths. By adding one step to the right at the end, we obtain lattice paths from $(0,0)$ to $(n+1,n)$ that do not touch the line $\Delta : y=x-1$ except at the end.   
Dyck words of length $2n$ can therefore  be bijectively identified with paths in $\mathcal{P}_n(1,1)$. Thus $|\mathcal{P}_n(1,1)|=|\mathcal{D}_n|=c_n=A_n(1,2)$. The following result generalizes this observation. For a proof see e.g. \cite{mohanty1979lattice}.
\end{Rem}

\begin{Thm}
For  integers $a, b $ and $n$ with $n \geq 0$, $a,b \geq 1$, it holds that
$|\mathcal{P}_n(a,b)|=A_n(a,b+1),$
where $A_n(a,b)$ is the generalized Catalan number introduced in Remark~\ref{rem.gen.cat}.
\end{Thm}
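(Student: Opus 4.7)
My plan is to prove this via the Cycle Lemma of Dvoretzky--Motzkin, which is the standard combinatorial tool underlying the Rothe/generalized Catalan numbers. The first step is to set up the encoding. Given a lattice path in $\mathcal{P}_n(a,b)$, I read it \emph{backwards} from $(a+bn,n)$ to $(0,0)$, and assign weight $+1$ to each (reversed) right-step and weight $-b$ to each (reversed) up-step. A short computation shows that the $k$-th partial sum $S_k$ of this weighted sequence equals $f(x,y) := by-x+a$ evaluated at the point reached after peeling off the last $k$ original steps. Since the line $\Delta: y=(x-a)/b$ is precisely the zero locus of $f$, and the origin lies on the side $f>0$, the requirement ``the original path avoids $\Delta$ except at its endpoint'' is equivalent to $S_k > 0$ for all $k=1,\dots,N-1$, where $N := a+(b+1)n$; the final sum $S_N = f(0,0) = a$ is automatic.

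Next I would invoke the Cycle Lemma: for any finite sequence of integers, each $\leq 1$ and summing to $s>0$, exactly $s$ of its $N$ cyclic rotations produce a sequence all of whose partial sums are positive. The weights $+1$ and $-b$ are both $\leq 1$ (using $b\geq 1$), and the total is $a>0$; hence among the $N$ cyclic rotations of any arrangement of $a+bn$ copies of $+1$ and $n$ copies of $-b$, exactly $a$ yield a valid reversed path of $\mathcal{P}_n(a,b)$.

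A standard double-counting then concludes. Let $X$ be the set of pairs $(\sigma,i)$, where $\sigma$ is an arbitrary arrangement of $a+bn$ copies of $+1$ and $n$ copies of $-b$, and $1 \leq i \leq N$ is a rotation index such that the $i$-th cyclic rotation of $\sigma$ is ``good''. Counted by $\sigma$, $|X| = a\binom{N}{n}$. Since cyclic rotation is a bijection on the set of all such arrangements, each good arrangement arises from exactly $N$ pairs in $X$, giving
\[
|\mathcal{P}_n(a,b)| \;=\; \frac{a}{N}\binom{N}{n} \;=\; \frac{a}{a+(b+1)n}\binom{a+(b+1)n}{n} \;=\; A_n(a,b+1).
\]

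The main obstacle I expect is the encoding itself: one must reverse the path so that all step-weights become $\leq 1$ (the hypothesis required by the Cycle Lemma), and carefully verify that the geometric condition ``not touching $\Delta$ except at the endpoint'' corresponds exactly to the strict positivity of all proper partial sums of the reversed weighted sequence. Once this correspondence is pinned down, the Cycle Lemma does all of the heavy lifting, and the binomial identity in the last display is purely formal.
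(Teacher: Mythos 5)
Your argument is correct. Note that the paper does not actually prove this theorem at all: it is quoted as a known result with a pointer to Mohanty's book on lattice path counting, so there is no in-paper proof to compare against. Your Cycle Lemma derivation is a clean, self-contained substitute. The encoding checks out: with $f(x,y)=by-x+a$, a right step changes $f$ by $-1$ and an up step by $+b$, so reading the path backwards turns these into weights $+1$ and $-b$, the $k$-th partial sum of the reversed sequence equals $f$ at the corresponding lattice point (starting from $f=0$ at the endpoint), and since the only negative increment of $f$ along the forward path is $-1$, ``never touching $\Delta$ before the end'' is indeed equivalent to strict positivity of all proper partial sums, with $S_N=a>0$ automatic. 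The Dvoretzky--Motzkin hypothesis (all weights $\leq 1$, total $a>0$) is satisfied because $b\geq 1$, and the double count over pairs (arrangement, good rotation index) correctly yields $\frac{a}{a+(b+1)n}\binom{a+(b+1)n}{n}=A_n(a,b+1)$; the base case $a=b=1$ recovers $c_n$ as the paper's remark requires. If anything, you could add one sentence observing that positivity of $f$ at the lattice points already forces the path to miss $\Delta$ in the interiors of the unit steps (by linearity of $f$ on each step), but this is a cosmetic point and does not affect the validity of the proof.
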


We are now going to show the following result:
\begin{Thm}
The elements of $\mathcal{P}_n(1,m)$ can bijectively be identified with \pes on the multiset $[n]_m$ that avoid the patterns $122$ and $123$ simultaneously. This implies 
\[
s_{n,m}(122,123)=|\mathcal{P}_n(1,m)|=A_n(1,m+1)=c_{n,m}.
\]
\end{Thm}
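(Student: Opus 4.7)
The plan is to show that $\mathcal{P}_n(1,m)$ and $\mathcal{S}_{n,m}(122,123)$ carry generating trees with the same rewriting rule, so that the isomorphism of trees furnishes the bijection; the enumeration then follows from Theorem~\ref{Thm.gen.catalan}. To each lattice path $P \in \mathcal{P}_n(1,m)$ I attach the label $\ell(P) := r_n(P)$, defined as the number of right-steps following the last up-step of $P$ (with $\ell(P) := 1$ when $n = 0$). The line-avoidance constraint $x \leq my$ at the lattice point immediately preceding the last up-step forces $r_n(P) \geq m+1$ whenever $n \geq 1$; in particular, $\ell(P) \geq 1$ always. Writing $\ell(P) = a$, the last up-step of $P$ sits at $(mn+1-a, n)$ and is followed by exactly $a$ right-steps to reach the endpoint $(mn+1, n)$.

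I would then describe the children of $P$ in $\mathcal{P}_{n+1}(1,m)$. For each $k \in \{1,\ldots,a\}$, construct $P'_k$ by copying $P$ up through its $n$-th up-step, then appending $a-k$ right-steps, one new up-step, and finally $m+k$ right-steps, ending at $(mn+m+1,n+1)$. The only non-trivial constraint to verify is $x \leq my$ at the newly introduced up-step point $(mn+1-k, n)$, i.e., $mn+1-k \leq mn$, which holds precisely because $k \geq 1$. By construction $\ell(P'_k) = m+k$, so $P$ produces $a$ children with labels $m+1, m+2, \ldots, m+a$. Conversely, given any $Q \in \mathcal{P}_{n+1}(1,m)$, deleting its $(n+1)$-th up-step together with the last $m$ right-steps of $Q$ recovers a unique path in $\mathcal{P}_n(1,m)$; the bound $r_{n+1}(Q) \geq m+1$ established above is exactly what makes this deletion well-defined, and comparing $r_{n+1}(Q)$ with the label of the parent recovers the value of $k$.

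This exhibits on $\mathcal{P}_n(1,m)$ the rewriting rule with root label $(1)$ and succession rule $(a) \to (m+1)(m+2)\ldots(m+a)$, which as a multiset of children-labels coincides with the rule $(a) \to (m+a)(m+1)(m+2)\ldots(m+a-1)$ derived for $(122,123)$-avoiding permutations in the proof of Theorem~\ref{Thm.gen.catalan}. A straightforward induction on $n$ then yields a label-preserving bijection $\mathcal{P}_n(1,m) \longleftrightarrow \mathcal{S}_{n,m}(122,123)$, and combining this with Theorem~\ref{Thm.gen.catalan} (or with the independent count $|\mathcal{P}_n(1,m)| = A_n(1,m+1)$) gives $s_{n,m}(122,123) = c_{n,m}$.

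The most delicate point will be the boundary case $k = a$, when the new up-step is inserted immediately after the old one at $(mn+1-a, n)$; one must check both that the resulting path stays admissible at $(mn+1-a, n+1)$ (which reduces to $a \geq 1-m$, automatic) and that the inverse deletion does not accidentally tear into the prefix of $P$. Once this lattice bookkeeping is in place, the argument reduces to matching the two rewriting rules.
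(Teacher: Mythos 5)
Your proof is correct and takes essentially the same route as the paper: both arguments show that the lattice paths and the $(122,123)$-avoiding permutations share the generating tree with root $(1)$ and children-label multiset $\{m+1,\ldots,m+a\}$, the paper doing so by encoding each family as the branch-label sequences $\mathcal{B}_{n,m}$ while you construct the succession rule on paths directly via an explicit append/delete operation. Your label $r_n(P)$ agrees with the paper's label $(my-x)+1$ at the point reached by the last up-step, and your verification of the boundary constraints (including the $k=a$ case and the inverse deletion) matches the paper's bookkeeping.
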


\begin{proof}
First we  bijectively identify a $(122,123)$-avoiding $[n]_m$-permu\-tation with a certain finite sequence of integers of length $n$. Then we do the same for all lattice paths in $\mathcal{P}_n(1,m)$.

Recall the definition of the generating tree of $(122,123)$-avoiding \pes made in the first proof of Theorem \ref{Thm.gen.catalan}. 
In this tree, every branch of length $n$ corresponds to a unique \pe\ on a multiset over the alphabet $[n]$ that avoids both mentioned patterns. 
The branch corresponding to a given \pe\ defines a sequence of length $(n+1)$, namely the sequence of the labels of its nodes. 
This sequence is well-defined and two different $n$-\pes cannot correspond to the same sequence of integers since, for any given node, each child has a different label. 
For an example, see the top part of Figure~\ref{fig.14777} where the sequence of integers corresponding to the permutation $\sigma=443322421311$ is represented. 

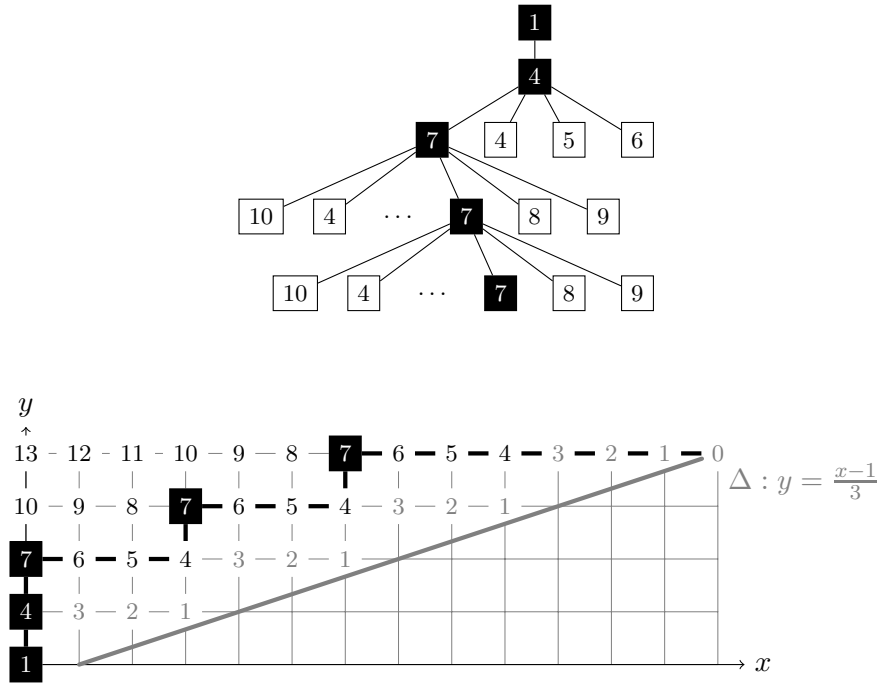
\begin{figure}[h]
\begin{minipage}{\textwidth}
\centering
\vspace{0pt}
\begin{tikzpicture}
[1/.style={rectangle,draw, fill=black, text centered, text=white, anchor=north},
2/.style={rectangle, draw, text centered, text=black, anchor=north},
font=\footnotesize,
level 1/.style={sibling distance=2cm, level distance=0.8cm},
level 2/.style={sibling distance=1.5cm, level distance=1cm},
level 3/.style={sibling distance=1.5cm, level distance=1.3cm},
level 4/.style={sibling distance=1.5cm, level distance=1.3cm},
scale=0.6]

\node (root) [1] {1} child { node [1]{4}
child { node [1]{7}
	child {node [2]{10}}
	child {node [2](a){4}}
	child [missing]
	child {node [1](b){7}
		child {node [2]{10}}
		child {node [2](i){4}}
		child [missing]
		child {node [1](j){7}}
		child {node [2]{8}}
		child {node [2]{9}}	
	}
	child {node [2]{8}}
	child {node [2]{9}}	
}
child { node [2] {4}
}
child { node [2]{5}
}
child { node [2]  {6}
}
}
;
\path (a) -- (b) node [midway] {$\ldots$};
\path (i) -- (j) node [midway] {$\ldots$};
\end{tikzpicture} 
\vspace{1cm}
\end{minipage}
\begin{minipage}{\textwidth}
\vspace{0pt}\raggedright
\centering
\begin{tikzpicture}[scale=0.7,cap=round]

  	\tikzstyle{axes}=[]
	\tikzstyle{important line}=[ultra thick, black]
	\tikzstyle{forbidden line}=[ultra thick, gray]
	\tikzstyle{1}=[circle, draw=red, fill=red, minimum size = 0.1cm]
	\tikzstyle{2}=[rectangle, draw=white, fill=white, minimum size = 0.01cm, font=\footnotesize]
	\tikzstyle{3}=[rectangle, draw=white, fill=white, minimum size = 0.001cm, text=gray, font=\footnotesize]
	\tikzstyle{4}=[rectangle, draw=black, fill=black, minimum size = 0.001cm, text=white, font=\footnotesize]

 \draw[style=help lines,step=1cm] (0,0) grid (13,4);
 
\begin{scope}[style=axes]
    \draw[->] (0,0) -- (13.5,0) node[right] {$x$};
    \draw[->] (0,0) -- (0,4.5) node[above] {$y$};
\end{scope}

\draw[-] (1,0) -- (13,4) node[below right] {$\textcolor{gray}{\Delta: y=\frac{x-1}{3}}$};

\draw[style=important line]
    (0,0)  -- (0,1);
\draw[style=important line]
    (0,1) -- (0,2);
\draw[style=important line]
    (0,2) -- (3,2);
\draw[style=important line]
    (3,2) -- (3,3);
\draw[style=important line]
    (3,3) -- (6,3);
\draw[style=important line]
    (6,3) -- (6,4);
\draw[style=important line]
    (6,4) -- (13,4);

\foreach \x in {4, 5, 6, 7, 8, 9, 10, 11, 12, 13}
    \node[2] at (13-\x, 4) {$\x$};
\foreach \x in {4, 5, 6, 7, 8, 9, 10}
    \node[2] at (10-\x, 3) {$\x$};
\foreach \x in {4, 5, 6, 7}
    \node[2] at (7-\x, 2) {$\x$};
\foreach \x in {4}
    \node[2] at (4-\x, 1) {$\x$};
\foreach \x in {1}
    \node[2] at (1-\x, 0) {$\x$};
\foreach \x in {0, 1, 2, 3}
    \node[3] at (13-\x, 4) {$\x$};
\foreach \x in {1, 2, 3}
    \node[3] at (10-\x, 3) {$\x$};
\foreach \x in {1, 2, 3}
    \node[3] at (7-\x, 2) {$\x$};
\foreach \x in {1, 2, 3}
    \node[3] at (4-\x, 1) {$\x$};
\draw[style=forbidden line] (1,0) -- (12.7,3.9);

\foreach \x/\y/\z in { 0/0/1, 0/1/4, 0/2/7, 3/3/7, 6/4/7}
    \node[4] at (\x,\y) {\z};
   
\end{tikzpicture}
\vspace{0.5cm}
\end{minipage}

\caption{Correspondence between restricted permutations and lattice paths. On top: In the generating tree of $(122,123)$-avoiding permutations on regular multisets with $m=3$, the sequence $14777$ corresponds to the \pe\ $(443322421311)$. At the bottom: Labelled lattice for paths from $(0,0)$ to $(13,4)$ not touching the line $\Delta$ except at the endpoint. The points marked in gray cannot be reached by an up-step. The lattice path marked in black corresponds to the sequence $14777$.}
\label{fig.14777}
\end{figure}

Now, what kind of sequences can be obtained in this way? The first element of the sequence is always $1$, the second one always $(m+1)$ (these two elements could thus be omitted in the sequence since they bear no information). The third element can be one of the following: $(m+1), (m+2), \ldots, (2m+1)$. In general, if the $i$-th element in the sequence is $a_i$, the next element $a_{i+1}$ can be one of the following $(m+1), (m+2), \ldots, (a_i+m)$. 

The map $f:\mathcal{S}_{n,m}(122,123) \rightarrow \mathcal{B}_{n,m}$ where 
\[\mathcal{B}_{n,m} \coloneqq \left\lbrace (1,m+1,a_3 \ldots, a_n, a_{n+1}): a_i \in \mathbb{N} \wedge m+1 \leq a_i \leq a_{i-1}+m \right\rbrace\] described above is then a bijection. 

In order to establish a bijection between the mentioned lattice paths and $(122,123)$-avoiding permutations, we construct a bijection $g:\mathcal{P}_n(1,m) \rightarrow \mathcal{B}_{n,m}$. For this purpose we are going to label the points in the plane through which an allowed lattice path may lead. The label at a certain point corresponds to the number of different choices that can be made at this point: is one allowed to take an up-step and how many side-steps may one take? This number can easily  be calculated  with the help of the equation of $\Delta$. For any point $(x_i,y_i)$ with $y_i<n$ lying above the ``forbidden'' line the number of steps that can be made to the right is $m \cdot y_i -x_i$. Take note of the following for $y_i=n$: the line must be touched at the end which allows for one more side-step but no further up-steps may be taken. So in total, the number of possible steps at the point $(x_i,y_i)$ is equal to $(m \cdot y_i -x_i)+1$. For instance, in the example of lattice paths from $(0,0)$ to $(13,4)$, i.e., $m=3$ and $n=4$, shown in the bottom part of Figure~\ref{fig.14777} there are $5= (3\cdot 2-2)+1$ possibilities at the point $(2,2)$: one can take one step up; or one, two, three or four steps to the side. A fifth step to the side would touch the line and is therefore not allowed.

A lattice path from $(0,0)$ to $(1+mn,n)$ then defines a sequence of integers as shown in the following. We start at the point $(0,0)$ which has the label $1$, therefore the first element of our sequence is $1$. Then we walk along the lattice path and every time an up-step is taken, we note the label of the point that is reached by this up-step. This means that the second element of our sequence must always be $(m+1)$. For the example leading to the sequence $14777$, see again the bottom part of Figure~\ref{fig.14777}. 

The labels at level $i$, i.e., the labels of points with $y$-coordinate equal to $i$, range from $1$ to $(im+1)$ but not all these values can appear in the sequence. On the one hand, not all ``allowed'' points can be reached by up-steps. In Figure~\ref{fig.14777} these points are marked in gray. In this specific case, one notes that the points that cannot be reached by up-steps are those with labels $1, 2 \text{ or } 3=m$. 
For the general case, one can easily check that these points are those with labels between $1$ and $m$. On the other hand, one is only allowed to make right-steps and not left-steps and therefore all positions that lie to the left of the steps made so far can no longer be reached. This means that if the label at the $i$-th level is $a_i$, the positions labelled with $a_i+m+1, a_i+m+2, \ldots, m \cdot i +1$ cannot be reached at the next level. Thus the sequences obtained in the way described above have the following property: the first two elements are always $1$ and $(m+1)$, the $(i+1)$-th element $a_{i+1}$ ($i \geq 3$) can take any value between $(m+1)$ and $a_{i}+m$. Thus the obtained sequences are again in $\mathcal{B}_{n,m}$. It is clear that conversely any sequence in $\mathcal{B}_{n,m}$ can uniquely be identified with a lattice path in $\mathcal{P}_n(1,m)$, making the map $g$ a bijection and finishing this proof.
\end{proof}

\section{Avoiding the patterns 211 and 213}
\label{sec.122,231}

\begin{Thm} For all $m \geq 2$,  $s_{1,m}(211,213)=1$ and $s_{2,m}(211,213)=m+1$. 
For $n \geq 3$ it holds that
$s_{n,m}(211,213)=2 s_{n-1,m}(211,213) + s_{n-2,m}(211,213)$.
\label{Thm.211,213}
\end{Thm}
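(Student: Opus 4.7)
The plan is to first characterise $(211,213)$-avoiding permutations on $[n]_m$ by a rigid decomposition into prefix, first $n$, and suffix, then reduce the count to an auxiliary sum satisfying a Pell-type recursion, from which the claimed recurrence follows algebraically. For the structural step: since $m\geq 2$, every value appears at least twice, so $211$-avoidance forces the first occurrences of $1,2,\ldots,n$ to be in increasing order of position. Applied to the first $n$ in the role of ``$3$'' for a $213$ pattern, the preceding part is non-decreasing, hence of the form $P = 1^{k_1} 2^{k_2}\cdots (n-1)^{k_{n-1}}$ with each $k_v\geq 1$. Re-applying $211$ with that first $n$ as the large element and two equal $v$'s after it forces $k_v\geq m-1$, so $k_v\in\{m-1,m\}$. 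Finally, using $213$-avoidance with $i_1$ in $P$ (which contains every value of $[n-1]$) and $i_2,i_3$ in the suffix $S$ shows that every strict ascent in $S$ must have gap exactly $1$; conversely, a short check on the possible locations of $i_1,i_2,i_3$ verifies that any sequence of this form avoids both patterns.

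Writing $V:=\{v\in[n-1]:k_v=m-1\}$, so that $V$ determines $P$ and $S$ contains $m-1$ copies of $n$ plus one copy of each $v\in V$, the ascent-by-$1$ condition forces every value $\leq n-2$ in $S$ to lie after every $n$ (a jump from $\leq n-2$ up to $n$ would have gap $\geq 2$). Splitting $S$ at the last $n$ into an initial part $T_1$ and a terminal part $T_2$, only $n-1$ can interleave with the $n$'s inside $T_1$. A case split on whether $n-1\in V$ and, if so, whether it lies in $T_1$ or in $T_2$ yields
\[
N(V)\;=\;\alpha(V)\;+\;(m-1)\,\alpha(V\setminus\{n-1\})\,\mathbf{1}_{n-1\in V},
\]
where $\alpha(W)$ denotes the number of arrangements of a finite set $W\subseteq\mathbb{N}$ in which every strict ascent has gap exactly $1$; the factor $m-1$ counts the placements of $n-1$ in the non-terminal slots of $T_1$. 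Setting $A(\ell):=\sum_{W\subseteq[\ell]}\alpha(W)$ and summing $N(V)$ over $V\subseteq[n-1]$ gives
\[
s_{n,m}(211,213)\;=\;A(n-1)+(m-1)\,A(n-2).
\]

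The final step is to prove the Pell relation $A(\ell)=2A(\ell-1)+A(\ell-2)$ for $\ell\geq 2$, with $A(0)=1$ and $A(1)=2$. I would classify each pair $(W,\pi)$ counted by $A(\ell)$ by the last entry $v$ of $\pi$: the empty $\pi$ contributes $1$; otherwise every preceding entry lies in $\{v-1\}\cup\{v+1,\ldots,\ell\}$, and splitting on whether $v-1$ appears (in which case the ascent-by-$1$ constraint forces $v-1$ to come after every other prior entry) yields $2A(\ell-v)$ choices for $v\geq 2$ and $A(\ell-1)$ for $v=1$. This gives $A(\ell)=1+A(\ell-1)+2\sum_{j=0}^{\ell-2}A(j)$; subtracting the same identity at $\ell-1$ telescopes to $A(\ell)-A(\ell-1)=A(\ell-1)+A(\ell-2)$, i.e.\ the Pell relation. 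Combining this with $s_{n,m}=A(n-1)+(m-1)A(n-2)$ and the two instances $A(n-1)=2A(n-2)+A(n-3)$, $A(n-2)=2A(n-3)+A(n-4)$ makes the identity $s_{n,m}=2s_{n-1,m}+s_{n-2,m}$ for $n\geq 3$ a short algebraic manipulation; the base cases $s_{1,m}=1$ and $s_{2,m}=m+1$ are immediate from the structural description (for $n=2$ one has one configuration with $V=\emptyset$ together with $m$ configurations for $V=\{1\}$). The main obstacle I expect is isolating the clean factorisation of $N(V)$ through the $T_1/T_2$ split; everything after that is routine.
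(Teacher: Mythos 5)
Your argument is correct, and it takes a genuinely different route from the paper. The paper proves this theorem with a generating tree: it tracks the statistic $a_\sigma=d_\sigma-o_\sigma$ (the distance between the $(m-1)$-th occurrence of the largest element and the first descent), shows $a_\sigma\in\{1,2\}$ or negative, derives the succession rule $(1)\to(2)$, $(2)\to(2)(1)(N)^{m-2}(2)$, and massages the induced linear recurrences into $s_m(n)=2s_m(n-1)+s_m(n-2)$. You instead give a static structural characterization of the avoiders --- prefix $1^{k_1}\cdots(n-1)^{k_{n-1}}$ with $k_v\in\{m-1,m\}$, the first $n$, and a suffix in which every increasing pair (note: your phrase ``strict ascent'' must be read as \emph{any} increasing pair, not just an adjacent one; your own derivations do use this reading consistently, and it is the one forced by $213$-avoidance since $P$ contains all of $[n-1]$) --- and reduce the count to $s_{n,m}=A(n-1)+(m-1)A(n-2)$, where $A(\ell)$ satisfies the Pell recurrence. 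I checked the structural claims, the formula for $N(V)$, the identity $A(\ell)=1+A(\ell-1)+2\sum_{j\le \ell-2}A(j)$ and the resulting telescoping; all are sound (e.g.\ $n=3$, $m=2$ gives $1+1+2+3=7=2\cdot3+1$ as it should). Two cosmetic points: the $n=3$ instance of the final recurrence needs a direct check or the convention $A(-1)=0$, since your two quoted instances of the Pell relation require $n\ge4$; and the converse verification that every sequence of the described form avoids both patterns, which you only assert, does go through. What each approach buys: the paper's generating-tree method is uniform with the techniques used for the other pattern pairs in the article, while yours yields an explicit description of the avoiding permutations and the decomposition $s_{n,m}=A(n-1)+(m-1)A(n-2)$ into Pell numbers, from which the closed form of Proposition \ref{Thm.explicit.211,213} also drops out immediately.
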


\begin{proof}
Suppose you are given a \pe\ $\sigma \in \mathcal{S}_{n,m}(211,213)$ and want to introduce $m$ copies of the element $(n+1)$ in order to generate a new \pe\ $\tilde{\sigma} \in [n+1]_m$ that avoids the patterns 211 and 213. 

First note that new elements may not be introduced before the $\left(m-1\right)$-th occurrence (from left to right) of the largest element otherwise a $211$-pattern will be created. We define 
\[
o_\sigma \coloneqq \text{min}\left\{i \in [nm]: \left|\left\{j \leq i: \sigma_j = n\right\}\right|=m-1 \right\}
\]
for $\sigma \in \mathcal{S}_{n,m}$. 

Let us denote by $d_\sigma$ the position of the first descent in $\sigma$, i.e., $d_\sigma= \text{min}\left\{i \in [n \cdot m]: \sigma_{i-1} > \sigma_{i}\right\}$.
Then no elements are allowed to be inserted to the right of the $d_\sigma$-th position in $\sigma$.
In case the permutation $\sigma \in [n]_m$ contains no descents at all, we set $d_\sigma=n\cdot m+1$.
The number of possible positions for the $(n+1)$-elements is then equal to $a_\sigma=d_\sigma-o_\sigma$.
For the empty permutation we set $d_\epsilon=1$ and $o_\epsilon=0$.
Note that if the difference $a_\sigma$ is negative, no new elements can be introduced without producing the forbidden patterns.
A difference $a_\sigma=0$ is never possible, since an occurrence of the largest element of a permutation can never be a descent.

We prove the following:
\begin{Cla} For $m \geq 2$ and $a_\sigma$ as defined above, it holds that:
\begin{compactitem}
\item $a_\sigma$ is always equal to $1$ or $2$ or is negative, 
\item if $a_\sigma$ is negative no new \pe\ is produced,
\item if $a_\sigma=1$ one new \pe\ $\tilde \sigma$ is produced with $a_{\tilde \sigma}=2$,
\item if $a_\sigma=2$ two new \pes $\tilde \sigma$ are produced with $a_{\tilde \sigma}=2$, one new one with $a_{\tilde \sigma}=1$ and $\left(m-2\right)$ new ones with $a_{\tilde \sigma}<0$.
\end{compactitem}
\label{claim2.211,213}
\end{Cla}

\begin{proof}[Proof of Claim \ref{claim2.211,213}]
We prove these statements by induction over $n$. For $n=0$ and $\sigma=\epsilon$ the first statement is true since, by definition, $a_\epsilon=1$. The empty permutation leads to one permutation with $n=1$, namely $\tilde \sigma = 11 \ldots 1$, for which it holds that  $a_{\tilde \sigma} = m+1-(m-1)=2$.

Now let $\sigma$ be a permutation in $\mathcal{S}_{n-1,m}$ with $o_\sigma=i$ and $n \geq 1$. By induction hypothesis $i$ can be negative or equal to 1 or 2.
In the first case clearly no new \pe\ can be produced since $a_\sigma < 0$ means that the first descent is to the left of the $\left(m-1\right)$-th occurrence of the largest element, thus every insertion of an $n$-element would produce a 211- or a 213-pattern. 

In the second case there is only a single possibility for placing all $m$ of the new elements, resulting in $o_{\tilde \sigma}=i+m-1$ and $d_{\tilde \sigma}=i+m+1$. Thus $a_{\tilde \sigma}=2$. 

Now, if the $n$-elements may be placed in two different positions, there are $m+1$ possibilities leading to $(211,213)$-avoiding \pes $\tilde \sigma$: place $0$ to $m$ elements in the first and the remaining elements in the second position. This leads to three different cases: placing $m$ elements in the first or $m$ elements in the second position leads to two new \pes with $a_{\tilde \sigma}=2$. Placing exactly one element in the second position leads to one new \pe\ with $a_{\tilde \sigma}=1$. If at least one element is placed in the first and at least two elements are placed in the second position (there are $\left(m-2\right)$ such possibilities), we have the following situation: $\tilde \sigma$ is of the form $\ldots \sigma_{i}\ldots n \sigma_{i+1} n \ldots n n \sigma_{i+2}$. Thus the first $21$-pattern is given by $n \sigma_{i+1}$, whereas the $(m-1)$-th occurrence of $n$ will always be to the right of $\sigma_{i+1}$ and therefore $a_{\tilde \sigma}<0$.

\end{proof}

We can sum up these results in the generating tree of $(211,213)$-avoiding permutations where the nodes are labelled by $a_\sigma$, as shown in Figure~\ref{tree211,213mlabels}. It is constructed by applying the following rewriting rule:
\begin{align}
\notag \text{Root: } & (1)  \\
\notag & (1)  \longrightarrow (2) \\
& (2)  \longrightarrow (2)(1)\underbrace{(N)\ldots (N)}_{\left(m-2\right)\text{times}}(2) 					\label{rule.211,213} \\
\notag & (N)  \longrightarrow \emptyset
\end{align}
\begin{figure}
\begin{center}
\begin{tikzpicture}
[1/.style={text centered, text=black},
2/.style={text centered, text=black},
N/.style={rectangle, fill=black, text centered, text=white, inner sep=2pt}, 
font=\tiny,
level 1/.style={level distance=0.8cm},
level 2/.style={sibling distance=1.2cm, level distance=1.3cm},
level 3/.style={sibling distance=.6cm, level distance=1.3cm},
level 4/.style={sibling distance=.3cm, level distance=1.3cm}]

\node (root) [1] {1} child { node [2] {2} 
child { node [2] {2}
		child{ node [2] {2}
				child{ node [2] {2}}
				child{ node [1] {1}}
				child{ node [N] (a){N}}
				child [missing]
				child{ node [N] (b){N}}
				child{ node [2] {2}}
		}
		child [missing]		
		child{ node [1] {1}
				child{ node [2] {2}}
		}
		child{ node [N] (c){N}}
		child{ node [N] (d){N}}		
		child{ node [2] {2}
				child{ node [2] {2}}
				child{ node [1] {1}}
				child{ node [N] (i){N}}
				child [missing]
				child{ node [N] (j){N}}
				child{ node [2] {2}}
		}
}
child [missing]
child [missing]
child { node [1] {1}
		child{ node [2] {2}
				child{ node [2] {2}}
				child{ node [1] {1}}
				child{ node [N] (k){N}}
				child [missing]
				child{ node [N] (l){N}}
				child{ node [2] {2}}
		}
}
child{ node [N] (e){N}
		child { node (g) {} edge from parent[draw=none] }
}
child{ node [N] (f){N}
		child { node (h) {} edge from parent[draw=none] }
}
child { node [2] {2}
	child{ node [2] {2}
			child{ node [2] {2}}
			child{ node [1] {1}}
			child{ node [N] (m){N}}
			child [missing]
			child{ node [N] (n){N}}
			child{ node [2] {2}}
	}
	child [missing]
	child{ node [1] {1}
			child{ node [2] {2}}
	}
	child{ node [N] (o){N}}
	child{ node [N] (p){N}}
	child{ node [2] {2}
			child{ node [2] {2}}
			child{ node [1] {1}}
			child{ node [N] (q){N}}
			child [missing]
			child{ node [N] (r){N}}
			child{ node [2] {2}}
	}
}
};
\path (a) -- (b) node [midway] {...};
\path (c) -- (d) node [midway] {...};
\path (e) -- (f) node [midway] {\ldots\ldots};
\path (i) -- (j) node [midway] {...};
\path (k) -- (l) node [midway] {...};
\path (m) -- (n) node [midway] {...};
\path (o) -- (p) node [midway] {...};
\path (q) -- (r) node [midway] {...};
\end{tikzpicture} 
\caption{The generating tree of $(211,213)$-avoiding permutations with nodes labelled by $a_\sigma$, the distance between the $\left(m-1\right)$-th occurrence of the largest element and the first descent. The black N-nodes, corresponding to \pes with negative $a_\sigma$ are repeated $\left(m-2\right)$ times in every ``block'' so that every 2-node has exactly $m+1$ children. N-nodes do not have any children.}
\label{tree211,213mlabels} 
\end{center}
\end{figure}
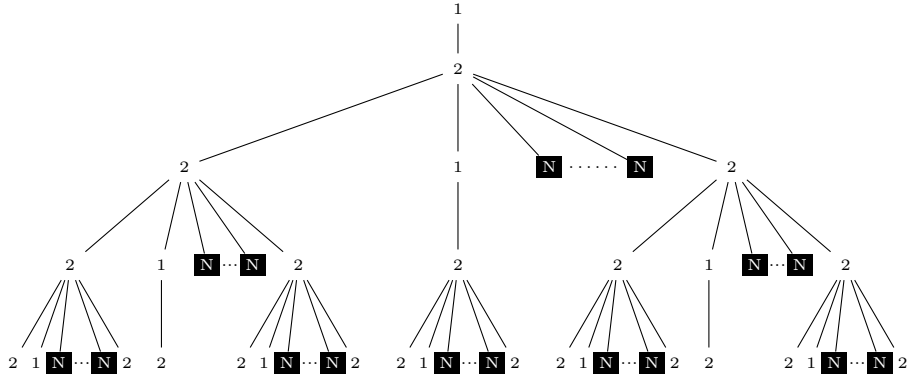

Let $s_m(n,i)$ be the number of nodes in the generating tree $\mathcal{T}_m$ labelled with $i$ at the height $n$. 
Translating the rewriting rule (\ref{rule.211,213}) into recurrence relations we obtain, for $n \geq 2$:
\begin{align}
\label{rec1}
s_m(n,N) & =  (m-2) \cdot s_m(n-1,2) = \left(m-2\right) \cdot s_m(n,1) \\ 
\label{rec2}
s_m(n,1) & =  s_m(n-1,2) \\ 
\label{rec3}
s_m(n,2) & =  s_m(n-1,1)  + 2 \cdot s_m(n-1,2) 
\end{align}
Now let $s_m(n) \coloneqq s_{n,m}(211,213)$ denote the number of permutations on $[n]_m$ avoiding the patterns 211 and 213.
Then recurrence~\eqref{rec1} leads to:
\[
s_m(n)=s_m(n,N)+s_m(n,1)+s_m(n,2)=\left(m-1\right) \cdot s_m(n,1) + s_m(n,2).
\]
Applying recurrences~\eqref{rec2} and \eqref{rec3} and using the above equation for $s_m(n)$ we obtain the following for $n \geq 3$:
\begin{align}
\notag s_m(n) 	& =  \left(m-1\right) \cdot s_m(n-1,2) + s_m(n,2) \\
\notag			& =  \left(m-1\right) \cdot s_m(n-2,1) + \underbrace{(m-1) \cdot 2 \cdot s_m(n-2,2)}_{=s_m(n-2,2)+(2m-3)\cdot s_m(n-2,2)} + s_m(n,2) \\
\notag 			& =  s_m(n-2)+(2m-3)\cdot s_m(n-1,1)+s_m(n-1,1)+2s_m(n-1,2)\\
\notag 			& =  s_m(n-2) +2(m-1) \cdot s_m(n-1,1) + 2s_m(n-1,2) \\
& =  s_m(n-2)+ 2s_m(n-1).
\label{rec.rel.211,213}
\end{align}
Following the rewriting rule (\ref{rule.211,213}), the initial values are given by: $s_m(1)=1$ and $s_m(2)=m+1$.
\end{proof}

\begin{Rem}
The sequence defined by the recurrence relation $s_m(n)=2s_m(n-1)+s_m(n)$ has a certain similarity to the well-known Fibonacci numbers $F_n=F_{n-1}+F_{n-2}$ with initial values $F_0=0$ and $F_1=1$. It is  a well-known fact that Fibonacci numbers can explicitly be computed using the so-called Binet-formula $F_n = \frac{\varphi^n-(1-\varphi)^n}{\sqrt 5} = \frac {\varphi^n-(-1/\varphi)^{n}}{\sqrt 5}$ where $\varphi = \frac{1 + \sqrt{5}}{2}$ is the golden ratio. We give a similar explicit formula for the sequence $s_{n,m}(211,213)$.
\label{rem.211,213.fibonacci}
\end{Rem}

\begin{Prop}
For $m \geq 2$ and $n \geq 1$, it holds that \vspace{-0.2cm}
\[
s_{n,m}(211,213) = \frac{1}{4} \left( \left(2-m\sqrt 2\right)\hspace{-0.1cm}\left(1 - \sqrt 2\right)^{n-1} \hspace{-0.3cm} + \left(2+m\sqrt 2\right)\hspace{-0.1cm}\left(1 + \sqrt 2\right)^{n-1}  													\right).
\]
\label{Thm.explicit.211,213}
\end{Prop}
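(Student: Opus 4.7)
The plan is to solve the linear recurrence $s_m(n) = 2 s_m(n-1) + s_m(n-2)$ established in Theorem~\ref{Thm.211,213} by standard methods for linear recurrences with constant coefficients. I would start by writing down the characteristic polynomial $x^2 - 2x - 1 = 0$, whose discriminant is $8$ and whose roots are $1 + \sqrt 2$ and $1 - \sqrt 2$. Since these roots are distinct, the general solution to the recurrence takes the form $s_m(n) = A (1+\sqrt 2)^{n-1} + B (1-\sqrt 2)^{n-1}$, where I deliberately choose the exponent $n-1$ rather than $n$ so that the unknowns $A$ and $B$ line up cleanly with the initial values indexed by $n=1, 2$.

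Next I would pin down the constants $A$ and $B$ using the two initial conditions $s_m(1) = 1$ and $s_m(2) = m+1$ supplied by the theorem. Setting $n=1$ gives $A + B = 1$, and setting $n=2$ gives $A(1+\sqrt 2) + B(1-\sqrt 2) = m+1$. Rewriting the second as $(A+B) + \sqrt 2 (A - B) = m+1$ and using $A+B=1$ yields $A - B = m/\sqrt 2 = m\sqrt 2 / 2$. Solving this $2 \times 2$ system gives $A = (2 + m\sqrt 2)/4$ and $B = (2 - m\sqrt 2)/4$, and substituting back produces precisely the closed form asserted in Proposition~\ref{Thm.explicit.211,213}.

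I do not expect any serious obstacle: the recurrence is homogeneous linear of order two with constant coefficients and distinct real roots of the characteristic polynomial, so a closed form of the stated type is guaranteed to exist and is uniquely determined by the two initial conditions. If I wanted to be completely explicit, I would round off the argument with a one-line induction verifying both that the proposed formula reproduces $s_m(1) = 1$ and $s_m(2) = m+1$ (a direct check) and that it satisfies $s_m(n) = 2 s_m(n-1) + s_m(n-2)$ (which is automatic since $1 \pm \sqrt 2$ are roots of $x^2 - 2x - 1$). This will complete the proof.
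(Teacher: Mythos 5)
Your argument is correct and complete: the characteristic polynomial $x^2-2x-1$ has the distinct roots $1\pm\sqrt 2$, the ansatz $A(1+\sqrt2)^{n-1}+B(1-\sqrt2)^{n-1}$ is justified, and your linear system $A+B=1$, $A-B=m\sqrt2/2$ yields exactly the constants in the stated formula. The paper reaches the same conclusion by a slightly different (though mathematically equivalent) route: it packages the recurrence and initial values into the generating function $A_m(z)=\sum_{n\ge0}s_{n+1,m}(211,213)z^n=\frac{1+(m-1)z}{1-2z-z^2}$ and then extracts coefficients via partial fraction decomposition over the roots $z=-1\pm\sqrt2$ of the denominator. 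Both methods ultimately rest on the same two algebraic numbers $1\pm\sqrt2$; your version is the more elementary one, dispensing with the generating-function machinery and replacing the partial-fraction step by a direct $2\times2$ linear system in the undetermined coefficients, while the paper's version fits more uniformly with the generating-function techniques it uses elsewhere (e.g.\ the kernel method in Theorems \ref{Thm.112,122=cn} and \ref{Thm.gen.catalan}). Your closing observation --- that verifying the two initial values plus the recurrence suffices because the solution of a second-order linear recurrence is uniquely determined by them --- is exactly the right way to make the argument airtight without further computation.
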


\begin{proof}
Applying the recurrence relation \eqref{rec.rel.211,213} as well as the initial values of $s_{n,m}(211,213)$ to the generating function $A_m(z) = \sum_{n \geq 0} {s_{n+1,m}(211,213)z^n} $,  one obtains:
\begin{align}
A_m(z) = 2z\left(A_m(z)-1\right)+z^2A_m(z)+1+\left(m+1\right)z.
\label{gen.func.211,213}
\end{align}
Rearranging equation (\ref{gen.func.211,213}) and making use of partial fraction decomposition leads to
\begin{align*}
A_m(z)&=\frac{1+\left(m-1\right)z}{1-2z-z^2}\\
&=\frac{1}{4}\left(\frac{2 + 2\sqrt 2 -m\left(2 + \sqrt 2\right)}{z+1+\sqrt 2}+\frac{2 - 2\sqrt 2 - m\left(2- \sqrt 2\right)}{z+1-\sqrt 2}\right). \\
\end{align*}
This finally implies that 
\begin{align*}
s_{n,m}(211,213) &=  \left[z^{n-1}\right] A_m(z) \\
& =  \frac 14 \left[ \left(2-m\sqrt 2\right)\hspace{-0.1cm}\left(1 - \sqrt 2\right)^{n-1} \hspace{-0.2cm} + \left(2+m\sqrt 2\right)\hspace{-0.1cm}\left(1 + \sqrt 2\right)^{n-1} \right]
\end{align*}
 for all $ n \geq 1$.
\end{proof}

\section{Avoiding the patterns 122 and 213}
\label{sec.122,213}

\begin{Thm}
For $m \geq 2$ we have $s_{1,m}(122,213)=1$ and $s_{2,m}(122,213)=m+1$.
For all $n \geq 3$ it holds that $s_{n,m}(122,213)=m s_{n-1,m}(122,213) + s_{n-2,m}(122,213)$.
\label{Thm.122,213}
\end{Thm}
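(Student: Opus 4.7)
My plan is to set up the generating tree of $(122,213)$-avoiding permutations, following the same philosophy as in the proof of Theorem~\ref{Thm.211,213}. I label each $\sigma\in\mathcal{S}_{n,m}(122,213)$ by $d_\sigma$, the length of its longest weakly increasing prefix, i.e., the largest $d$ such that $\sigma_1\leq\sigma_2\leq\dots\leq\sigma_d$. The plan is to show that this label always lies in $\{m-1,m\}$ for $n\geq 1$ and that the resulting succession rule is simple enough to yield the stated recurrence.

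I would first control the insertion of the $m$ copies of $(n+1)$ into $\sigma$. Pattern $122$ forbids having two $(n+1)$'s to the right of any smaller element: applied to the first letter $\sigma_1$, this forces $m-1$ copies to be placed before $\sigma_1$, leaving only one ``free'' copy that may go anywhere. Pattern $213$ then forbids this free copy from sitting after a descent of $\sigma$, because otherwise a pair $\sigma_i>\sigma_j$ (with $i<j$) together with the new $(n+1)$ would form a $213$-pattern. Hence the free copy can be inserted either at position $0$ (giving an initial block $(n+1)^m$) or between $\sigma_k$ and $\sigma_{k+1}$ for any $k=1,\dots,d_\sigma$, yielding exactly $d_\sigma+1$ children of $\sigma$.

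Next I would determine the labels of the children. Inserting the free copy at position $0$ produces $(n+1)^m\sigma_1\sigma_2\dots$, whose weakly increasing prefix has length exactly $m$ (since $\sigma_1<n+1$ breaks the run), hence the child has label $m$. Inserting the free copy at any position $k\geq 1$ produces $(n+1)^{m-1}\sigma_1\dots\sigma_k(n+1)\sigma_{k+1}\dots$, whose weakly increasing prefix is just $(n+1)^{m-1}$, hence the child has label $m-1$. Since the tree is rooted at the unique level-$1$ node $1^m$ (of label $m$), an easy induction shows every label in the tree belongs to $\{m-1,m\}$, and the corresponding succession rule is
\begin{align*}
(m-1) &\longrightarrow (m)\underbrace{(m-1)(m-1)\cdots(m-1)}_{(m-1)\text{ times}},\\
(m)   &\longrightarrow (m)\underbrace{(m-1)(m-1)\cdots(m-1)}_{m\text{ times}}.
\end{align*}

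Finally I would extract the recurrence. Writing $A_n$ and $B_n$ for the number of level-$n$ nodes with labels $m-1$ and $m$ respectively, so that $s_{n,m}=A_n+B_n$, both rules contribute exactly one $B$-child per parent, giving $B_{n+1}=A_n+B_n=s_{n,m}$, and hence $B_n=s_{n-1,m}$ and $A_n=s_{n,m}-s_{n-1,m}$ for $n\geq 2$. Substituting into $s_{n+1,m}=A_{n+1}+B_{n+1}=mA_n+(m+1)B_n$ collapses to $s_{n+1,m}=m\,s_{n,m}+s_{n-1,m}$, which is the claimed recurrence, while the base cases $s_{1,m}=1$ and $s_{2,m}=m+1$ are immediate from the root of the tree. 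The main obstacle is the case analysis in the first two paragraphs: one must rule out every insertion outside the prescribed positions, confirm that inserting a single $(n+1)$ inside $\sigma$ never creates a spurious $122$- or $213$-pattern with the $(m-1)$ copies at the start, and verify that the label of a child really lies in $\{m-1,m\}$ independently of which of the two label values the parent had.
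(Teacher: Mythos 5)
Your proposal is correct and follows essentially the same route as the paper: a generating tree with each permutation labelled by (up to a shift of one) the position of its first descent, the observation that $m-1$ copies of $(n+1)$ are pinned to the front while the free copy may go anywhere up to that descent, and the resulting two-label succession rule, which is identical to the paper's rule $(m+1)\to(m+1)(m)^{m}$, $(m)\to(m+1)(m)^{m-1}$ after translating your labels $m,m-1$ to $m+1,m$. Your extraction of the recurrence from the rule via $A_n,B_n$ is the same manipulation the paper performs, just written out more explicitly.
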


\begin{figure}
\begin{center}
\begin{tikzpicture}
[1/.style={ text centered},
2/.style={rectangle, draw,  text centered, inner sep =2pt},
eps/.style={ text centered}, 
font=\small,
level 2/.style={sibling distance=2.5cm},
level 3/.style={sibling distance=0.7cm},
level distance=1.3cm]

\node (root) [eps] {1} child { node [2] {m+1}
child { node [2] {m+1}
		child{ node [2] {m+1}}
		child{ node [1] {m}}
		child{ node [1] (a){m}}
		child{ node [1] (b){m}}
}
child { node [1] {m}
		child{ node [2] {m+1}}
		child{ node [1] (c){m}}
		child{ node [1] (d){m}}
}
child { node [1] (e) {m}
		child{ node [2] {m+1}}
		child{ node [1] (g){m}}
		child{ node [1] (h){m}}
}
child [missing]
child { node [1] (f) {m}
		child{ node [2] {m+1}}
		child{ node [1] (i){m}}
		child{ node [1] (j){m}}
}
};
\path (a) -- (b) node [midway] {...};
\path (c) -- (d) node [midway] {...};
\path (e) -- (f) node [midway] {\ldots\ldots};
\path (g) -- (h) node [midway] {...};
\path (i) -- (j) node [midway] {...};
\end{tikzpicture}  
\caption{Generating tree of $(122,213)$-avoiding permutations with nodes labelled by $d_\sigma$, the position of the first descent. The dots between two $(m+1)$-nodes represent to $\left(m-3\right)$  $m$-nodes, so that a $\left(m+1\right)$-node has exactly $\left(m+1\right)$ children and a $m$-node has exactly $m$ children.}
\label{tree122,213mlabels}
\end{center} 
\end{figure}
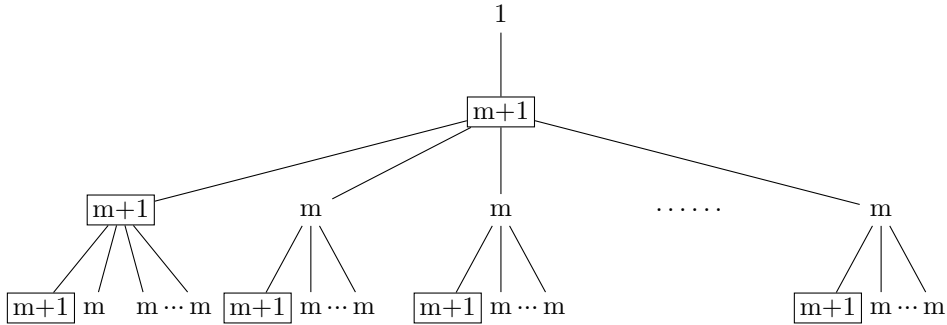

\begin{proof}
Let $\sigma$ be a permutation in $S_{n,m}(122,213)$. 
In order to produce an element $\tilde{\sigma}$ in $S_{n+1,m}(122,213)$ by introducing $m$ copies of the element $(n+1)$, $\left(m-1\right)$ copies of the element $(n+1)$ have to be placed at the beginning of $\sigma$ and the remaining $\left(n+1\right)$ has to be placed somewhere to the left of the position $d_\sigma$, which denotes the position of the first descent in $\sigma$ as defined in the proof of Theorem~\ref{Thm.211,213}.

Similar arguments as those used in the proof of Theorem \ref{Thm.211,213} lead to the following rewriting rule for the generating tree of $(122,213)$-avoiding permutations (as depicted in Figure~\ref{tree122,213mlabels}) with nodes labelled by $d_\sigma$:
\begin{align}\label{rew.rule.122,213}
\notag \text{Root: } & (1)  \\ 
\notag & (1)  \longrightarrow (m+1) \\
& (m+1)  \longrightarrow (m+1)\underbrace{(m)\ldots (m)}_{m\text{ times}} \\
\notag & (m)  \longrightarrow (m+1)\underbrace{(m)\ldots (m)}_{\left(m-1\right)\text{ times}}
\end{align}
Let $s_m(n,i)$ be the number of nodes in the generating tree labelled with $i$ at height $n$. Then $s_m(n)=s_m(n,m)+s_m(n,m+1)$ is the number  of permutations on $[n]_m$ avoiding the patterns 122 and 213 for $n\geq 1$.
Manipulating the recurrence relations given by the rewriting rule, one obtains
$s_m(n) =  ms_m(n-1) + s_m(n-2) \text{ for } n \geq 3$
and the initial values $s_m(1)=1$, $s_m(2)=m+1$.
\end{proof}

\begin{Rem}
This sequence defined by the recurrence relation $s_n=ms_{n-1}+s_{n-2}$ can again be seen as a generalization of Fibonacci numbers, cf. Remark \ref{rem.211,213.fibonacci}. An explicit formula can also be given in this case.
\end{Rem}

\begin{Prop}
\label{coeff122,213}
For $m \geq 2$ and $n \geq 1$, it holds that
\begin{align*}
S_{n,m}(211,213) &= \frac{2^{-n}}{\sqrt{m^2+4}} \bigg( \left(2 + \sqrt{m^2+4} +m\right)\left(m+\sqrt{m^2+4}\right)^{n-1}  \\
&\textcolor{white}{=}    - \left(2 - \sqrt{m^2+4}+m\right) \left(m-\sqrt{m^2+4}\right)^{n-1}\bigg).
\end{align*}
\end{Prop}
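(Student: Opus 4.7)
The plan is to mimic the argument used for Proposition~\ref{Thm.explicit.211,213}: translate the second-order linear recurrence from Theorem~\ref{Thm.122,213} into a rational generating function, apply partial fraction decomposition, and read off the coefficients. Note that despite the notation ``$S_{n,m}(211,213)$'' on the left-hand side, the quantity being computed is clearly $s_{n,m}(122,213)$, governed by the recurrence $s_m(n)=ms_m(n-1)+s_m(n-2)$ with $s_m(1)=1$ and $s_m(2)=m+1$.

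First I would introduce the generating function
\[
A_m(z)=\sum_{n\geq 0} s_{n+1,m}(122,213)\,z^n.
\]
Splitting off the first two terms $s_m(1)+s_m(2)\,z$ and applying the recurrence to the tail $\sum_{n\geq 2} s_m(n+1)z^n$, the contributions $mz\bigl(A_m(z)-s_m(1)\bigr)$ and $z^2 A_m(z)$ arise, which after collecting terms should yield the compact rational form
\[
A_m(z)=\frac{1+z}{1-mz-z^2}.
\]
The ``$1+z$'' in the numerator is the miracle that has to be verified carefully; the initial values $1$ and $m+1$ are designed exactly so that the $mz$ term cancels the spurious contribution from $s_m(1)$.

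Next I would factor the denominator as $1-mz-z^2=(1-\lambda_1 z)(1-\lambda_2 z)$, where $\lambda_{1,2}=\tfrac{m\pm\sqrt{m^2+4}}{2}$ are the roots of the characteristic equation $x^2-mx-1=0$. Using $\lambda_1+\lambda_2=m$ and $\lambda_1\lambda_2=-1$, partial fractions give
\[
A_m(z)=\frac{C_1}{1-\lambda_1 z}+\frac{C_2}{1-\lambda_2 z}, \qquad C_1=\frac{\lambda_1+1}{\lambda_1-\lambda_2},\quad C_2=-\frac{\lambda_2+1}{\lambda_1-\lambda_2},
\]
so that $s_m(n)=[z^{n-1}]A_m(z)=C_1\lambda_1^{n-1}+C_2\lambda_2^{n-1}$.

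Substituting $\lambda_1-\lambda_2=\sqrt{m^2+4}$ and pulling the factor $2^{-(n-1)}$ out of $\lambda_i^{n-1}$, together with an extra $2^{-1}$ coming from $\lambda_i+1=\tfrac{2+m\pm\sqrt{m^2+4}}{2}$, yields exactly the claimed closed form. The only real work is the algebraic simplification at the end; the main (minor) obstacle is avoiding sign errors in the partial fraction step, since the second summand picks up a negative sign that must be carried through to the $(2+m-\sqrt{m^2+4})$ factor in the statement.
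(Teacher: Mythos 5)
Your proposal is correct: the paper actually states Proposition~\ref{coeff122,213} without proof, but the intended argument is exactly the one you give, mirroring the generating-function/partial-fraction proof of Proposition~\ref{Thm.explicit.211,213}, and your computation of $A_m(z)=\frac{1+z}{1-mz-z^2}$ and the resulting coefficients checks out (e.g.\ it returns $1$ and $m+1$ for $n=1,2$). You are also right that the left-hand side is a typo for $s_{n,m}(122,213)$.
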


\section{Avoiding the patterns 122 and 312}
\label{sec.122,312}

\begin{Thm}
$s_{n,m}(122,312)=\left(n-1\right)\cdot m +1$ for all $n \in \mathbb{N}, m \geq 2$ .
\label{Thm.122,312}
\end{Thm}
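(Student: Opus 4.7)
The plan is to characterize $(122,312)$-avoiding permutations on $[n]_m$ by two structural constraints and then count directly; the case $n=1$ is immediate as $\mathcal{S}_{1,m}(122,312)=\{1^m\}$, matching the formula, so I will assume $n\ge 2$. The first step will invoke the $122$-avoiding structure recalled in Section~\ref{sec.122,123.wilf.122,132}: in any $\sigma\in\mathcal{S}_{n,m}(122)$ the $m-1$ skeleton copies of $n$ precede every entry of value less than $n$, and the free (i.e.\ $m$-th) copy of any value $i<n$ must lie to the right of its own skeleton block. Chaining these constraints gives that no entry other than a skeleton $n$ can occupy any of the first $m-1$ positions of $\sigma$, forcing $\sigma_1=\sigma_2=\cdots=\sigma_{m-1}=n$, with the remaining free $n$ sitting somewhere in positions $m,m+1,\ldots,nm$.

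The second step is the only place where $312$-avoidance is used. I will show that the subsequence $\tau$ of $\sigma$ obtained by deleting all copies of $n$ is weakly decreasing. If not, $\tau_i<\tau_j$ for some $i<j$, and then $\sigma_1=n$ together with the entries $\tau_i,\tau_j$ in their original positions in $\sigma$ forms a $312$-pattern, since $n>\tau_j>\tau_i$. Hence $\tau$ is weakly decreasing, and because $\tau$ contains each value of $\{1,\ldots,n-1\}$ exactly $m$ times, it must equal
\[
(n-1)^m,(n-2)^m,\ldots,1^m.
\]

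Combining the two observations, $\sigma$ is determined by the single choice of the position $p\in\{m,m+1,\ldots,nm\}$ of the free $n$, producing at most $(n-1)m+1$ permutations. To close the count I will verify the converse: for each such $p$, the permutation $\sigma^{(p)}$ built by placing $n$'s at positions $1,\ldots,m-1$ and at $p$ and filling the remaining positions with the weakly decreasing word $(n-1)^m,\ldots,1^m$ avoids both patterns. The first-occurrence order in $\sigma^{(p)}$ is $n,n-1,\ldots,1$, giving $122$-avoidance, and any putative $312$-pattern would need $n$ as its largest entry and a strict ascent among non-$n$ entries for the other two entries---impossible because the non-$n$ subsequence is weakly decreasing.

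I expect the main obstacle to be identifying the structural fact $\sigma_1=n$ (which needs the first step) as the single observation that powers the $312$-step: once it is in place every ascent among non-$n$ entries converts immediately into a $312$-pattern with $\sigma_1$ as the ``$3$'', and the remaining counting and verification become routine.
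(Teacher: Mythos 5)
Your proof is correct and follows essentially the same route as the paper's: $122$-avoidance forces $m-1$ copies of $n$ to the front, $312$-avoidance (with $\sigma_1=n$ playing the ``3'') forces the remaining non-$n$ entries into the weakly decreasing word $(n-1)^m\cdots 1^m$, and the free copy of $n$ contributes the $(n-1)m+1$ insertion positions. One small caveat in your converse verification: ``the first-occurrence order is $n,n-1,\ldots,1$'' does not by itself imply $122$-avoidance (e.g.\ $212211$ on $[2]_3$ has decreasing first occurrences yet contains $122$); the correct, equally easy observation is that in $\sigma^{(p)}$ the first $m-1$ copies of every value precede all smaller entries.
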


\begin{proof}
One can easily see that a permutation $\sigma$ on $[n]_m$ that avoids both the patterns 122 and 312 must be of the following form 
\[
\sigma = \underbrace{n \ldots n}_{(m-1) \text{ times}} \tau,
\]
where $\tau$ is constructed by inserting the element $n$ at any position of 
\[
\underbrace{(n-1) \ldots (n-1)}_{m \text{ times}}\underbrace{(n-2) \ldots (n-2)}_{m \text{ times}} \ldots  \underbrace{1 \ldots 1}_{m \text{ times}}.
\]

On the one hand it is clear that at least $\left(m-1\right)$ $n$'s have to stand at the beginning of $\sigma$ if a 122-pattern should be avoided. 
On the other hand, if the copies of the elements in $[n-1]$ are not arranged in decreasing order, i.e., if $\tau$ contained a $12$-pattern, $\sigma$ would certainly contain a $312$ pattern. 
Since the length of $\tau$ is $(n-1)m$, there are $(n-1)m + 1$ possibilities of inserting the element $n$ into $\tau$ and the total number of permutations in $\mathcal{S}_{n,m}(122,312)$ is $(n-1)m + 1$.
\end{proof}

\section{Avoiding the patterns 122 and 321}
\label{sec.122,321}

\begin{Thm}
For $m\geq2$ 
\[
s_{n,m}(122,321)=\begin{cases} 1 & \text{for } n=1, \\ m+1 & \text{for } n=2, \\ 0 & \text{for all } n \geq 3.\end {cases}
\]
\label{Thm.122,321}
\end{Thm}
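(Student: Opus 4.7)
The plan is to split into three cases by $n$. For $n \geq 3$ the key observation is a structural fact about $122$-avoiding permutations on $[n]_m$ that was already established inside the proof of Theorem~\ref{Thm122,123=122,132}.

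The case $n=1$ is immediate: the only element of $\mathcal{S}_{1,m}$ is the word $1^m$, which contains no pair of distinct values and therefore avoids both $122$ and $321$ vacuously.

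For $n=2$ the pattern $321$ is automatically avoided, since only two distinct values are available, so the task reduces to counting $122$-avoiding permutations on $[2]_m$. If some $1$ were to sit to the left of two $2$'s, those entries would form a $122$-pattern; hence at most one $2$ may appear to the right of any $1$, forcing the first $m-1$ positions of $\sigma$ to be $2$'s. The resulting suffix of length $m+1$ must contain the one remaining $2$ together with all $m$ ones, and that extra $2$ may be placed in any of the $m+1$ positions of the suffix, yielding exactly $m+1$ permutations.

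For $n \geq 3$ I would appeal to the observation made in Section~\ref{sec.122,123.wilf.122,132}, namely that every $122$-avoiding $[n]_m$-permutation contains the sequence
\[
\underbrace{n,\ldots,n}_{m-1 \text{ times}}\,\underbrace{n-1,\ldots,n-1}_{m-1 \text{ times}}\,\ldots\,\underbrace{1,\ldots,1}_{m-1 \text{ times}}
\]
as a subsequence. Because $m \geq 2$ gives $m-1 \geq 1$, and $n \geq 3$ supplies three distinct values $n$, $n-1$, $n-2$, one can pick one copy of each in that order to obtain a strictly decreasing subsequence of length three, that is, a $321$-pattern. Consequently no permutation in $\mathcal{S}_{n,m}$ with $n \geq 3$ can simultaneously avoid $122$ and $321$, and $s_{n,m}(122,321)=0$.

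I do not anticipate any genuine obstacle here: the case $n \geq 3$ reduces instantly to the structural lemma already at hand, and the two small cases are routine counting arguments.
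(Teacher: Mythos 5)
Your proposal is correct and follows essentially the same route as the paper: the small cases $n\le 2$ are handled by noting that $321$ cannot occur with only two distinct values (the paper states the count $m+1$ without spelling out the suffix argument you give, but it is the same structural fact), and the case $n\ge 3$ invokes exactly the observation from the proof of Theorem~\ref{Thm122,123=122,132} that every $122$-avoiding permutation on $[n]_m$ contains the decreasing subsequence of $m-1$ copies of each value, which immediately yields a $321$-pattern.
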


\begin{proof}
For $n\leq2$, all permutations on $[n]_m$ avoid the pattern $321$ and thus $\mathcal{S}_{n,m}(122,321)=\mathcal{S}_{n,m}(122)$. Clearly, $s_{1,m}(122,321)=1$ and $s_{2,m}(122,321)=m+1$.

As seen for $122$-avoiding permutations in the proof of Theorem~\ref{Thm122,123=122,132}, the elements $n^{m-1}, \left(n-1\right)^{m-1}, \ldots,$ $2^{m-1}, 1^{m-1}$ always appear in this order from left to right (and are left-to-right-minima). Thus, for $n\geq3$ and $m\geq2$, a $n\left(n-1\right)\left(n-2\right)$-subsequence can always be found and every $\sigma \in \mathcal{S}_{n,m}$ contains the pattern $321$.
\end{proof}

\section{The Stanley-Wilf conjecture for permutations on multisets}
\label{sec:StanleyWilf}

When taking a look at the enumerative results obtained for permutations on regular multisets avoiding a pair of patterns of length three, summed up in Table \ref{tab:all.results}, one can observe the following:
All formul\ae\ in the first (Pairs of ordinary patterns) and third (Pairs of one ordinary and one multiset-pattern) column of Table \ref{tab:all.results} can be bounded by $c^{n \cdot m}$ for some constant $c$. In the second column (Pairs of multiset-patterns) this is not the case since $s_{n,m}(212, 121)=n!$. 

This suggests the following conjecture: a generalized version of the Stanley-Wilf conjecture also holds for permutations on (regular) multisets that avoid an ordinary pattern, but does not hold for permutations avoiding a multiset pattern.
In the following we prove this conjecture and show how it may be formulated for patterns in words.

Let us first recall the original version of the former Stanley-Wilf conjecture which states that the number of $n$-\pe s avoiding an arbitrary given pattern does not grow faster than exponentially:

\begin{Thm}
[Stanley-Wilf Conjecture, 1990, proven 2004 in \cite{klazar2000füredi,marcus2004excluded}] Let $\pi$ be an arbitrary pattern and $s_n(\pi)$ denote the number of permutations of $[n]$ that avoid the pattern $\pi$. 
Then there exists a constant $c_\pi$ such that for all positive integers it holds that 
\begin{align}
s_n(\pi) \leq c_\pi^n.
\end{align}
\label{Conj_Stanley_Wilf}
\end{Thm}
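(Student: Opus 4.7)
The plan is to reduce the statement to the Füredi--Hajnal conjecture on $0$-$1$ matrix patterns, following the approach of Marcus and Tardos. First I would encode each permutation $\sigma \in \mathcal{S}_n$ as its permutation matrix $M_\sigma$, the $n \times n$ $0$-$1$ matrix with exactly one $1$ in each row and each column. Pattern containment $\sigma \supseteq \pi$ then translates to the statement that $M_\pi$ can be obtained from $M_\sigma$ by deleting rows and columns (and possibly changing some $1$'s to $0$'s). Hence $s_n(\pi)$ equals the number of $n \times n$ permutation matrices that avoid $M_\pi$ as a submatrix in this sense, and the task becomes bounding this quantity by $c_\pi^n$.

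The next step is to establish (or invoke) the Füredi--Hajnal bound: for every permutation matrix $P$ there is a constant $f(P)$ such that any $n \times n$ $0$-$1$ matrix with more than $f(P) \cdot n$ ones contains $P$ as a submatrix. The Marcus--Tardos proof of this proceeds by partitioning an $n \times n$ matrix into $k^2$ blocks of size $(n/k) \times (n/k)$, where $k = |P|^2$, and calling a block \emph{wide} (resp.\ \emph{tall}) if it meets $1$'s in at least $k$ distinct columns (resp.\ rows). A double-counting argument shows that if the total number of $1$'s is too large compared to $n$, then one can find $|P|$ wide blocks in a suitable row pattern of blocks, forcing an occurrence of $P$; the constant $f(P)$ that comes out is essentially $2 k^4 \binom{k^2}{k}$.

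Finally, from the Füredi--Hajnal bound the Stanley--Wilf statement is derived by an induction previously due to Klazar. Given an $n \times n$ permutation matrix avoiding $P$, one collapses pairs of consecutive rows and consecutive columns to form an $(n/2) \times (n/2)$ matrix $M'$; the preimage of each nonzero cell of $M'$ has only a bounded number of fillings, and the total number of nonzero cells in $M'$ is at most $f(P) \cdot (n/2)$ by Füredi--Hajnal. This produces a recursion of the form $s_n(\pi) \leq 15^{f(P)} \cdot s_{n/2}(\pi)^2$, which iterates to the desired bound $s_n(\pi) \leq c_\pi^n$.

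The main obstacle is clearly the Füredi--Hajnal step. The Klazar reduction is a clean, self-contained induction once the linear density bound is in hand, but the Marcus--Tardos block argument is the delicate combinatorial heart of the proof: the correct block size $k = |P|^2$ and the precise way of distinguishing wide/tall blocks from "small" ones have to be tuned so that an averaging argument produces $|P|$ blocks whose joint nonzero pattern simultaneously realizes the rows \emph{and} the columns of $P$. Everything else in the argument is either an encoding step or a routine recurrence.
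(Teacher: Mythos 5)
Your proposal follows essentially the same route as the paper: reduce Stanley--Wilf to the F\"uredi--Hajnal bound (proved by Marcus and Tardos) and then apply Klazar's contraction argument. The paper phrases the reduction via simple bipartite graphs and ordered subgraph containment rather than $0$-$1$ matrices, but these are the same objects (the adjacency matrix of $G_\sigma$ is your $M_\sigma$), and like you it defers the Marcus--Tardos block argument to the literature. One substantive slip in your write-up: the halving step cannot be run on $s_n(\pi)$ directly, because the contraction of a permutation matrix is no longer a permutation matrix. This is precisely why Klazar (and the paper) first passes to the larger class of all $G_\pi$-avoiding bipartite graphs, bounding $s_n(\pi)\leq g_n(\pi)$, since that class \emph{is} closed under contraction; the correct recursion is then $g_n(\pi)\leq 15^{d_\pi n/2}\, g_{n/2}(\pi)$ (each of the at most $d_\pi n/2$ nonzero cells of the contracted matrix has at most $2^4-1=15$ nonzero preimage fillings), not $s_n(\pi)\leq 15^{f(P)}\, s_{n/2}(\pi)^2$ as you wrote --- your version both misplaces the factor $n/2$ in the exponent and introduces an unjustified squaring. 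Iterating the correct recursion gives $g_n(\pi)\leq 15^{2d_\pi n}$ and hence $s_n(\pi)\leq c_\pi^n$ with $c_\pi=15^{2d_\pi}$, as in the paper.
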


A generalization of the Stanley-Wilf conjecture has been considered independently by Klazar and Marcus \cite{klazar2007extensions}  and by Balogh, Bollob\'as and Morris \cite{DBLP:journals/ejc/BaloghBM06}. Klazar and Marcus proved an exponential bound on the number of hypergraphs avoiding a fixed permutation, settling various conjectures of Klazar as well as a conjecture of Br\"and\'en and Mansour. Balogh, Bollob\'as and Morris went even further in their generalization and showed similar results for the growth of hereditary properties of partitions, ordered graphs and ordered hypergraphs. For details, please consider the original work. The results in \cite{DBLP:journals/ejc/BaloghBM06,klazar2007extensions} being very general however required rather involved proofs. In both papers, a generalized version of the F\"uredi-Hajnal conjecture (Theorem \ref{Conj_Fueredi_Hajnal}) was first formulated and the proof of several intermediary results was necessary.

From the results in both these papers it follows that the Stanley-Wilf conjecture also holds for permutations on multisets respectively words since these can be represented with the help of bipartite graphs which are (very) special cases of hypergraphs. 
To the best of our knowledge this fact has however not yet been stated in the literature. 
We therefore wish to stress this point here, as pattern avoidance in permutations on multisets and words has attracted a great deal of interest over the past few years. 

In the following we show that Klazar's proof \cite{klazar2000füredi} can very easily be formulated for words or for permutations on multisets without requiring the employment of any other results or a further generalization of already known results. 
Let us first recapitulate how the Stanley-Wilf conjecture was proven.

Stanley-Wilf was not proven directly but via another conjecture concerning pattern avoidance in binary matrices formulated by F\"uredi and Hajnal in~\cite{furedi1992davenport}. 

\begin{Def}
Let $P$ and $Q$ be matrices with entries in $\left\lbrace 0,1 \right\rbrace $ and let $Q$ have dimension $m\times n$. We say that the matrix $P$ \emph{contains} $Q$ as a pattern, if there is a submatrix $\tilde Q$ of $P$, so that $\tilde{Q}_{i,j}=1$ whenever $Q_{i,j}=1$ for $i \leq m$ and $j \leq n$. If there is no such submatrix $\tilde Q$, we say that $P$ \emph{avoids} $Q$.
\label{def_avoidance_matrices}
\end{Def}

\begin{Thm}[F\"uredi-Hajnal Conjecture]
Let $Q$ be any \pe\ matrix. We define $f(n,Q)$ as the maximal number of $1$-entries that a $Q$-avoiding $(n \times n)$-matrix $P$ may have. Then there exists a constant $d_Q$ so that
\[
f(n,Q) \leq d_Q \cdot n.
\]
\label{Conj_Fueredi_Hajnal}
\end{Thm}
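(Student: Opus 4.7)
The plan is to establish this bound via the celebrated argument of Marcus and Tardos (2004), which, combined with Klazar's earlier reduction, implies the Stanley--Wilf conjecture. Write $Q$ for the given $k\times k$ permutation matrix and let $P$ be an $n\times n$ $\{0,1\}$-matrix avoiding $Q$; the goal is to show that the number of $1$-entries of $P$ is at most $d_Q\cdot n$ by induction on $n$.

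First I would partition $P$ into an $s\times s$ array of $k^2\times k^2$ blocks, where $s=\lceil n/k^2\rceil$ (padding $P$ with zero rows/columns if necessary), and introduce the contracted matrix $\bar{P}$ of size $s\times s$ whose $(I,J)$-entry equals $1$ precisely when the corresponding block of $P$ contains at least one $1$. A first key observation is that $\bar{P}$ itself avoids $Q$: any $Q$-pattern in $\bar{P}$ at block positions $(I_a,J_{\pi(a)})$ lifts to a $Q$-pattern in $P$ by selecting an arbitrary $1$-entry inside each of the $k$ participating blocks, since distinct block rows and block columns automatically preserve the required row and column orderings. Consequently $\bar{P}$ has at most $f(s,Q)$ non-empty blocks.

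Next I would classify each non-empty block as \emph{wide} if its $1$-entries occupy at least $k$ distinct columns of the block, \emph{tall} if they occupy at least $k$ distinct rows, and \emph{light} otherwise. A light block is confined to at most $k-1$ rows and $k-1$ columns, so it contains at most $(k-1)^2$ ones. The central combinatorial lemma, which is the heart of the proof, states that any horizontal strip of $k^2$ rows of $P$ contains at most $(k-1)\binom{k^2}{k}$ wide blocks, with the symmetric statement for tall blocks in any vertical strip of $k^2$ columns. The proof is a delicate pigeonhole: to each wide block associate a $k$-subset of $[k^2]$ recording columns in which it has a $1$; if there were more wide blocks than the stated bound, some $k$ of them in the same strip would share such a subset $S=\{c_1,\dots,c_k\}$, and a careful selection of one $1$-entry per block in the $|S|=k$ common columns would produce a forbidden $Q$-pattern in $P$.

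Adding up contributions then yields the recursion
\[
f(n,Q)\;\leq\;2k^{2}(k-1)\binom{k^2}{k}\cdot n\;+\;(k-1)^{2}\,f\!\bigl(n/k^{2},\,Q\bigr),
\]
where the first term bounds $1$-entries contained in wide or tall blocks (at most $k^{4}$ entries each, and at most $s\cdot(k-1)\binom{k^2}{k}$ blocks of each type across all strips), and the second term bounds $1$-entries in light blocks (at most $(k-1)^{2}$ each, with at most $f(s,Q)$ such blocks). Since $(k-1)^{2}<k^{2}$, a straightforward induction on $n$ produces a linear bound $f(n,Q)\leq d_Q\cdot n$ with an explicit constant such as $d_Q=2k^{4}(k-1)\binom{k^2}{k}/(2k-1)$. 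The principal obstacle is the combinatorial lemma on wide blocks: one has to verify that the $k$ coincident wide blocks really do furnish a $Q$-pattern, and it is precisely here that the choice of block dimension $k^{2}$ (rather than $k$) is crucial, providing enough row variety for the shared column-pattern to be realised as the permutation prescribed by $Q$.
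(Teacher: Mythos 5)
First, note that the paper does not prove this statement at all: Theorem~\ref{Conj_Fueredi_Hajnal} is quoted as the F\"uredi--Hajnal conjecture, proved by Marcus and Tardos, and the text explicitly refers the reader to the original literature for its proof. What you have written is therefore a reconstruction of the Marcus--Tardos argument rather than something to be matched against a proof in this paper. Your global architecture is the right one: the $k^2\times k^2$ block decomposition, the contracted matrix $\bar{P}$ (which does avoid $Q$, for the reason you give --- a permutation pattern occupies distinct block rows and distinct block columns, so arbitrary representatives lift it), the wide/tall/light trichotomy, and the recursion $f(n,Q)\le 2k^2(k-1)\binom{k^2}{k}\,n+(k-1)^2 f(n/k^2,Q)$, which indeed unrolls to a linear bound with essentially the constant you state.

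The genuine gap is in the orientation of your key lemma, and as you have stated it the lemma is false. You bound the number of \emph{wide} blocks in a \emph{horizontal} strip by $(k-1)\binom{k^2}{k}$. But blocks in the same horizontal strip occupy pairwise disjoint sets of columns of $P$, so the $k$-subset of $[k^2]$ you attach to a wide block records only \emph{relative} column positions; $k$ wide blocks sharing such a subset give you, in each block, one $1$-entry per chosen relative column at some uncontrolled row, and nothing forces the $k$ selected entries to lie in distinct rows, let alone in rows ordered as $Q$ prescribes. Concretely, the $n\times n$ matrix whose first row is all $1$'s and which is otherwise zero avoids every $k\times k$ permutation matrix with $k\ge 2$, yet every block of its top horizontal strip is wide, so the count can be as large as $n/k^2$. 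The correct statement is the transposed one: wide blocks are bounded per \emph{vertical} strip (block column), where $k$ coincident wide blocks share $k$ \emph{actual} columns $c_1<\dots<c_k$ and sit in distinct block rows, so the row order of the selected entries is automatic and choosing the entry in column $c_{\pi(i)}$ inside the $i$-th block realises $Q$; dually, tall blocks are bounded per horizontal strip. With this swap your counting, your recursion and your constant all go through unchanged, so the gap is repairable, but the central lemma as written cannot be proved because it is not true.
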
 

In the year 2000 Martin Klazar proved that the F\"uredi-Hajnal conjecture implies the Stanley-Wilf conjecture \cite{klazar2000füredi}. Four years later the F\"uredi-Hajnal conjecture was proven by Adam Marcus and G\'abor Tardos in \cite{marcus2004excluded}, finally providing a  proof of the long-standing Stanley-Wilf conjecture. 
We refer the reader to the original literature for the proof of the F\"uredi-Hajnal conjecture.

For the proof of the generalization of the Stanley-Wilf conjecture to permutations on multisets respectively to words, it will merely be necessary to show that F\"uredi-Hajnal implies a generalized version of Stanley-Wilf. Let us therefore briefly recall the argument used by Martin Klazar in his proof. 

In order to establish a connection between pattern avoidance in matrices and pattern avoidance in \pe s Klazar takes an elegant detour via pattern avoidance in simple bipartite graphs and defines the following notion of pattern containment:

\begin{Def}
Let $P([n],[n'])$ and $Q([k],[k'])$ be simple bipartite graphs, where $k \leq n$ and $k' \leq n'$. Then we say that $P$ \emph{contains $Q$ as an ordered subgraph} if two order preserving injections $f:[k]\rightarrow[n]$ and $f':[k']\rightarrow[n']$ can be found so that if $vv'$ is an edge of $Q$, then $f(v)f'(v')$ is an edge of $P$.
\label{Def:graph.avoid}
\end{Def}

Clearly, every \pe\ can be identified with a simple bipartite graph in a unique way. For a \pe\ $\sigma$ on $[n]$ the associated graph $G_\sigma$ is the bipartite graph with vertex set $([n],[n])$ and where $e=(i,j)$ is an edge iff $\sigma_i=j$ in $\sigma$. 
Then the following is a direct consequence: If the \pe\ $\sigma$ contains a \pe\ $\pi$ as a pattern, then $G_\sigma$ contains $G_\pi$ as an ordered subgraph. Reversely, if $\sigma$ avoids $\pi$, $G_\sigma$ will also avoid $G_\pi$. 
However, not every simple bipartite graph corresponds to a \pe\ (this is only the case if the vertex degree is equal to one for all vertices), thus $s_n(\pi) \leq  g_n(\pi)$, where $g_n(\pi)$ is the number of simple bipartite graphs on $([n],[n])$ avoiding the graph $G_\pi$ corresponding to a \pe\ $\pi$. 

Let $G$ be a simple bipartite graph on $([n],[n])$ that avoids $G_\pi$.
Then the adjacency matrix $A(G)$ of $G$ avoids the adjacency matrix $A(G_\pi)$ and Theorem \ref{Conj_Fueredi_Hajnal} implies that $A(G)$ can have at most $d_\pi \cdot n=d_{A(G_{\pi})} \cdot n$ entries equal to $1$, respectively that $G_\pi$ can have at most $d_\pi \cdot n$ edges.
By gradually contracting the graph $G$ - reducing its size to half in every step without loosing the $G_\pi$-avoiding property - Klazar shows that this leaves at most an exponential number of possibilities for the graph $G$: $g_n(\pi) \leq 15^{2d_\pi n}$. Thus $s_n(\pi) \leq c_\pi ^n$ with $c_\pi = 15^{2d_\pi}$.

Our goal is to prove the following generalization of Theorem \ref{Conj_Stanley_Wilf}:
\begin{Cor}
Let $\pi$ be an arbitrary \pe\ on an \emph{ordinary} set.
If $s_\mu(\pi)=s_{\mu(1), \ldots, \mu(n)}(\pi)$ denotes the number of \pe s on the \mul\ $\left\lbrace 1^{\mu(1)}, \ldots, n^{\mu(n)}\right\rbrace $ with $\sum_{i=1}^n \mu(i)=l$ avoiding $\pi$ and $w_{l,n}(\pi)$ denotes the number of words of length $l$ over the alphabet $[n]$ avoiding $\pi$ where $l\geq n$, there exists a constant $e_\pi$ so that the following holds:
\begin{align}
s_\mu(\pi) \leq e_\pi^{l} \text{ and } w_{l,n}(\pi) \leq e_\pi^{l}.
\label{eqn_multi_sw}
\end{align}
\label{Multi_Stanley_Wilf}
\end{Cor}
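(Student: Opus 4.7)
The plan is to reduce both inequalities directly to Klazar's bipartite-graph bound $g_n(\pi) \leq 15^{2 d_\pi n}$ that is highlighted at the end of the excerpt, with only a small amount of padding. Since permutations on multisets and words are encoded by bipartite graphs that are barely more general than those arising from ordinary permutations, no new combinatorial idea should be required; Klazar's argument can be reused as a black box once the encoding is in place.

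The first step is to associate to every permutation $\sigma$ on $\{1^{\mu(1)},\ldots,n^{\mu(n)}\}$ of length $l$---and to every word $w$ of length $l$ over $[n]$---the simple bipartite graph $G_\sigma$ on the vertex set $([l],[n])$ that contains the edge $(i,j)$ whenever $\sigma_i = j$ (respectively $w_i = j$). In both cases every left-vertex has degree exactly one, while the right-vertex $j$ has degree $\mu(j)$ in the multiset case and arbitrary non-negative degree in the word case. Since the graph records position $i$ and label $\sigma_i$ for every $i$, two different objects yield two different graphs, so an upper bound on the number of $G_\pi$-avoiding graphs obtained in this way immediately yields the desired upper bound on $s_\mu(\pi)$ and $w_{l,n}(\pi)$.

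Klazar's bound is formulated for bipartite graphs whose two sides are of equal size, so I would embed $G_\sigma$ into a bipartite graph on $([l],[l])$ by padding the right side with $l-n$ isolated vertices. This is possible because $l \geq n$: in the multiset case every letter occurs at least once by Definition~\ref{Def:multiset}, and in the word case this is the standing hypothesis. Padding is injective, and the key verification is that the padded graph still avoids $G_\pi$ whenever $\sigma$ avoids $\pi$. Indeed, any containment of $G_\pi$ in the padded graph is witnessed by order-preserving injections $f, f' \colon [k] \to [l]$ such that $f(v)f'(v')$ is an edge whenever $vv'$ is an edge of $G_\pi$; because $G_\pi$ comes from an ordinary permutation and hence has no isolated vertices, the image of $f'$ consists only of non-padded right-vertices, so $f$ and $f'$ already witness a $\pi$-containment inside $\sigma$, a contradiction.

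With avoidance preserved, applying the bound $g_l(\pi) \leq 15^{2 d_\pi l}$ to the padded graphs yields $s_\mu(\pi) \leq 15^{2 d_\pi l}$ and $w_{l,n}(\pi) \leq 15^{2 d_\pi l}$, so the choice $e_\pi := 15^{2 d_\pi}$ works. The only genuinely non-routine step in the whole argument is the verification that padding cannot spuriously create a containment, and this hinges precisely on the fact that the graph of an ordinary permutation has no isolated vertices---so the reduction collapses cleanly rather than forcing one to prove a rectangular or hypergraph-level strengthening of the Klazar estimate.
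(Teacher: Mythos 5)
Your proposal is correct and follows essentially the same route as the paper: encode each multiset-permutation or word as a simple bipartite graph on $([l],[n])$, pad the right side with $l-n$ isolated vertices to get a balanced graph on $([l],[l])$ avoiding $G_\pi$, and invoke Klazar's bound $g_l(\pi)\leq 15^{2d_\pi l}$ to conclude with $e_\pi=15^{2d_\pi}$. Your explicit check that padding cannot create a spurious containment (since $G_\pi$ has no isolated vertices) is a small, welcome addition of detail that the paper leaves implicit.
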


\begin{proof}
We merely need to show that $s_\mu(\pi)=s_{\mu(1), \ldots, \mu(n)}(\pi) \leq  g_{l}(\pi)$ and that  $w_{l,n}(\pi) \leq  g_{l}(\pi)$ in case $l\geq n$.
This is not difficult: For the first case, i.e., the number of permutations on the multiset $\left\lbrace 1^{\mu(1)}, \ldots, n^{\mu(n)}\right\rbrace$ avoiding the pattern $\pi$, observe that every element $\sigma \in \mathcal{S}_\mu$ can represented by a simple bipartite graph $G_\sigma$ on the vertex set $([l],[n])$ where $l=\sum_{i=1}^n \mu(i)$ is the length of $\sigma$. 
If $\sigma$ avoids the permutation pattern $\pi$, then $G_\sigma$ avoids the graph $G_\pi$ in the sense defined in Definition~\ref{Def:graph.avoid}.

For the second case, i.e., for words of length $l \geq n$ we proceed similarly.
A word of length $l$ over an alphabet of size $n$ that avoids the permutation pattern $\pi$ can be represented by a simple bipartite graph $G_\sigma$ on the vertex set $([l],[n])$ avoiding the graph $G_\pi$.

For both cases we can add $(l-n)$ (which is non-negative in both cases) vertices to the second vertex set of $G_\sigma$, i.e., to $[n]$, without introducing any new edges and obtain a balanced simple bipartite graph $\tilde{G}_\sigma$ on the vertex set $([l],[l])$ that avoids $G_\pi$.
We thus obtain that the following inequalities hold: $s_\mu(\pi) \leq g_l(\pi)$ and $w_{l,n}(\pi) \leq  g_{l}(\pi)$.
It is known from Klazar's proof that $g_l(\pi) \leq 15^{2d_\pi l}$. 
This proves both the first and the second statement of \eqref{eqn_multi_sw} with $e_\pi=15^{2d_\pi}$.
\end{proof}

\begin{Rem}
For the case that $l<n$ we can show that $w_{l,n}(\pi) \leq e_\pi^{n}$ which however is in general no improvement over $n^l$, i.e., the total number of words of length $l$ over the alphabet $[n]$.

It this case we can however not hope to obtain an exponential bound of the type $e_\pi^{l}$ as can be seen by taking a look at the following simple example.
Let $\pi$ be the pattern $12$, thus the words of length $l$ over $[n]$ avoiding $\pi$ are those containing no ascents.
Let us restrict ourselves to permutations of length $l$ over a subset of the alphabet $[n]$.
Given a choice of $l$ letters from $[n]$, there is only a single permutation that avoids $\pi$  namely the decreasing one.
Thus there are exactly $\binom{n}{l}$ permutations of length $l$ and consisting of letters from $[n]$ that avoid $\pi$.
Clearly $\binom{n}{l} \geq (n/l)^l$ and therefore $w_{l,n}(12) \geq (n/l)^l$ which can never be bounded by $e_\pi^{l}$ for arbitrary $n$.
Thus in general, a bound of the type $e_\pi^{l}$ is not possible for $w_{l,n}(\pi)$ in the case that the alphabet is larger than the word is long.
\label{Rem:l<n}
\end{Rem}

\begin{Rem}
The condition that the avoided pattern $\pi$ is a \pe\ on an ordinary set is crucial in Corollary~\ref{Multi_Stanley_Wilf}. 
Indeed, the number $s_{n,m}(\pi)$ may grow faster than exponentially if the pattern $\pi$ is a \mul -\pe . For example, consider $m$-Stirling \pe s, i.e., \pe s on a regular \mul\ with multiplicity $m$ avoiding the pattern $212$. As stated in \cite{kuba2010enumeration}, the number of $m$-Stirling \pe s on the set $[n]$ is equal to $n!m^n \binom {n-1+1/m}{n}$ and thus grows super-exponentially.

The same remark holds for the number of words avoiding a given pattern.
\end{Rem}

\bibliographystyle{abbrv}
\bibliography{references}
\label{lastpage}

\end{document}